\font\tencmmib=cmmib10 \skewchar\tencmmib '60
\def\lessim{\ \lower4pt\hbox{$
\buildrel{\displaystyle <}\over\sim$}\ }
\def\gessim{\ \lower4pt\hbox{$\buildrel{\displaystyle >}
\over\sim$}\ }
\newcommand{\e}{\mathbb{E}}
\newcommand{\p}{\mathbb{P}}
\newcommand{\vsi}{\boldsymbol{\sigma}}
\newtheorem{lemma}{\bf Lemma}
\newtheorem{theorem}{\bf Theorem}
\newtheorem{corollary}{\bf Corollary}
\newtheorem{remark}{\bf Remark}
\newtheorem{proposition}{\bf Proposition}
\newenvironment{Proof of lemma}{\noindent{\bf Proof of Lemma}}{\hfill$\Box$\newline}
\newenvironment{Proof of theorem}{\noindent{\bf Proof of Theorem}}{\hfill{\footnotesize${\square}$}\newline}
\newenvironment{Proof of theorems}{\noindent{\bf Proof of Theorems}}{\hfill$\Box$\newline}
\newenvironment{Proof of proposition}{\noindent{\bf Proof of Proposition}}{\hfill$\Box$\newline}
\newenvironment{Proof of propositions}{\noindent{\bf Proof of Propositions}}{\hfill$\Box$\newline}
\newenvironment{Proof of exercise}{\noindent{\it Proof of Exercise:}}{\hfill$\Box$}
\begin{document}

\title{The Parisi formula has a unique minimizer }
\author{Antonio Auffinger  \thanks{auffing@math.uchicago.edu. The research of A. A. is supported by NSF grant DMS-1407554.} \\ \small{University of Chicago}\and Wei-Kuo Chen \thanks{wkchen@math.uchicago.edu} \\ \small{University of Chicago} }
\maketitle

\begin{abstract}
In 1979, G. Parisi \cite{Par79} predicted a variational formula for the thermodynamic limit of the free energy in the Sherrington-Kirkpatrick model and described the role played by its minimizer. This formula was verified in the seminal work of Talagrand \cite{Tal06} and later generalized to the mixed $p$-spin models by Panchenko \cite{P12}. In this paper, we prove that the minimizer in Parisi's formula is unique at any temperature and external field by establishing the strict convexity of the Parisi functional.
\end{abstract}

\section{Introduction and main results}

The Sherrington-Kirkpatrick (SK) model was introduced in \cite{SK75}. For any $N\geq 1,$ its Hamiltonian at (inverse) temperature $\beta>0$ and external field $h\in\mathbb{R}$ is given by
$$
\frac{\beta}{\sqrt{N}}\sum_{i,j=1}^Ng_{ij}\sigma_i\sigma_j+h\sum_{i=1}^N\sigma_i
$$
for $\vsi=(\sigma_1,\ldots,\sigma_N)\in\Sigma_N:=\{-1,+1\}^N$, where $g_{ij}$'s are independent standard Gaussian random variables. It is arguably the most well-known model of disordered mean field spin glasses. Over the past few decades, its study has generated hundreds of papers in both theoretical physics and mathematics communities. We refer readers to the book of M\'{e}zard-Parisi-Virasoro \cite{MPV} for physics' methodologies and predictions and the books of Talagrand \cite{Tal11} and Panchenko \cite{P13} for its recent rigorous treatments. 

\smallskip

This paper is concerned with a generalization of the SK model, the so-called mixed $p$-spin model, which corresponds to the Hamiltonian 
\begin{equation}\label{eq:bqs}
H_{N}(\vsi) = H_N'(\vsi)+h\sum_{i=1}^N\sigma_i
\end{equation}
for $\vsi\in\Sigma_N$, where $$
H_{N}'(\vsi)=\sum_{p=2}^{\infty} \beta_p  H_{N,p}(\vsi)
$$
is the linear combination of the pure $p$-spin Hamiltonian,
\begin{equation}
  \label{eq:Hamiltonianpspin}
  H_{N,p}(\boldsymbol\sigma) = \frac{1}{N^{(p-1)/2}}
  \sum_{i_1, \dots, i_p=1}^N
  g_{i_1, \dots, i_p} \sigma_{i_1}\dots \sigma_{i_p}.
\end{equation}
 Here $g_{i_1, \dots, i_p}$'s are independent standard Gaussian random variables for all $p\geq 2$ and all $(i_1,\ldots,i_p).$ The nonnegative real sequence $(\beta_p)_{p\geq 2}$ is called the temperature parameters and $h\in\mathbb{R}$ denotes the strength of the external field.  We assume that $\beta_p>0$ for at least one $p\geq 2$ and $(\beta_p)_{p\geq 2}$ decreases fast enough, for instance, $\sum_{p=2}^\infty 2^p\beta_p^2<\infty$. The SK model can be recovered by choosing $\beta_p = 0$ for all $p\geq3$. A direct computation gives 
$$
\e H_{N}'(\vsi^1)H_N'(\vsi^2)=N\xi(R_{1,2}),
$$
where $R_{1,2}:=N^{-1}\sum_{i=1}^N\sigma_i^1\sigma_i^2$ is the overlap between spin configurations $\vsi^1$ and $\vsi^2$ and
 \begin{align}\label{eq-1}
 \xi(s):=\sum_{p=2}^\infty\beta_p^2s^p,\,\,\forall s\in[-1,1].
 \end{align}
Define the Gibbs measure as $
G_N(\vsi)=Z_N^{-1}\exp(-H_N(\vsi))
$
for $\vsi\in\Sigma_N$,
where the normalizing factor $Z_N$ is known as the partition function. 

\smallskip

Let $\mathcal{M}$ be the collection of all probability measures on $[0,1]$ endowed with the metric $d(\mu,\mu'):=\int_0^1|\mu([0,s])-\mu'([0,s])|ds$. Denote by $\mathcal{M}_d$ the collection of all atomic measures from $\mathcal{M}.$ Let $\mu\in \mathcal{M}_d$ with jumps at $\{q_p\}_{p=1}^{k+1}$ for some $k\geq 0.$ Set $q_0=0,$ $q_{k+2}=1$ and $m_l=\mu([0,q_l])$ for $0\leq l\leq k+1.$ We set a real-valued function $\Phi_\mu$ on $[0,1]\times\mathbb{R}$ as follows. Starting with $\Phi_\mu(1,x)=\Phi_{\mu}(q_{k+2},x)=\log\cosh x,$ define for $(s,x)\in [q_{k+1},q_{k+2})\times\mathbb{R},$
\begin{align}
\begin{split}\label{eq:ParPDE:eq1}
\Phi_\mu(s,x)&=\frac{1}{m_{k+1}}\log \e \exp m_{k+1}\Phi_\mu (q_{k+2}, x+z\sqrt{\xi'(q_{k+2})-\xi'(s)})\\
&=\log\cosh x+\frac{1}{2}(\xi'(1)-\xi'(s))
\end{split}
\end{align}
and decreasingly for $0\leq l\leq k,$ 
\begin{align}
\label{eq:ParPDE:eq2}
\Phi_\mu(s,x)=\frac{1}{m_l}\log \e \exp m_l\Phi_\mu (q_{l+1}, x+z\sqrt{\xi'(q_{l+1})-\xi'(s)})
\end{align}
for $(s,x)\in [q_l,q_{l+1})\times\mathbb{R}$, where $z$ is a standard Gaussian random variable. Two important properties about $\Phi_{\mu}$ are the following. First, it satisfies the Parisi PDE on each $[q_{l},q_{l+1})\times\mathbb{R},$
\begin{align}
 \label{eq:ParPDE}
 \partial_s \Phi_\mu(s,x) = - \frac{\xi''(s)}{2}\left(\partial_{xx}\Phi_\mu(s,x)+\mu([0,s])\ (\partial_x \Phi_\mu(s,x))^2\right).
 \end{align}
Second, it is well-known (see Guerra \cite{Guerra03}) that the mapping $\mu\mapsto \Phi_\mu$ indeed defines a Lipschitz functional from $(\mathcal{M}_d,d)$ to $(C([0,1]\times \mathbb R),\|~\cdot~\|_\infty)$. This allows us to extend $\Phi_\mu$ continuously to all $\mu\in\mathcal{M}.$ Note that $\Phi_\mu$ does not necessarily satisfy \eqref{eq:ParPDE} for arbitrary $\mu.$ In particular, if $\mu$ is continuous on $[0,1],$ then $\Phi_{\mu}$ is the classical solution to the Parisi PDE \eqref{eq:ParPDE} with terminal condition $\Phi_{\mu}(1,x)=\log\cosh x.$ Throughout this paper, we shall call, with a slightly abuse of notation, the function $\Phi_{\mu}$ for any $\mu\in\mathcal{M}$ the Parisi PDE solution. We now define Parisi's functional as
\begin{equation}\label{eq:ParFunctionalFormula}
\mathcal{P}_{\xi,h}(\mu)=\Phi_\mu(0,h)
 \end{equation}
 for $\mu\in\mathcal{M}.$ With these notations, the thermodynamic limit of the free energy can now be computed through

\begin{theorem}[Parisi formula \cite{Tal06, P12}]
\label{pf} We have almost surely,
\begin{equation}\label{ParisiFormulaA}
\lim_{N\rightarrow \infty} \frac{1}{N} \log Z_N = \min_{\mu\in \mathcal{M}}\left(\log 2+\mathcal{P}_{\xi,h}(\mu)-\frac{1}{2}\int_0^1s\xi''(s)\mu([0,s])ds\right).
\end{equation}
\end{theorem}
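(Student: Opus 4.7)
The plan is to prove matching upper and lower bounds on $\lim_N N^{-1}\e\log Z_N$; the almost sure statement then follows from Gaussian concentration of $N^{-1}\log Z_N$, since $(g_{i_1,\ldots,i_p})\mapsto N^{-1}\log Z_N$ is Lipschitz with constant $O(N^{-1/2})$ and Borel--Cantelli closes the gap to the mean.

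\smallskip

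For the upper bound I would carry out Guerra's replica-symmetry-breaking interpolation. Fix an atomic $\mu\in\mathcal{M}_d$ with jumps at $0=q_0<q_1<\cdots<q_{k+1}\le 1$ and weights $m_l=\mu([0,q_l])$. Using a Ruelle probability cascade $(v_\alpha)_{\alpha\in\mathbb{N}^{k+1}}$ with parameters $m_1\le\cdots\le m_{k+1}$, introduce an interpolating Hamiltonian on $\Sigma_N\times\mathbb{N}^{k+1}$,
\begin{equation*}
H_t(\vsi,\alpha)=\sqrt{t}\,H_N'(\vsi)+\sqrt{1-t}\,Y(\vsi,\alpha)+Z(\alpha)+h\sum_{i=1}^N\sigma_i,
\end{equation*}
where $Y$ and $Z$ are independent Gaussian cavity fields with covariances built from $\xi$ at the $q_l$. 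Let $\phi(t)$ be the associated quenched free energy per spin. At $t=1$ the $Y$ term drops and $\phi(1)=N^{-1}\e\log Z_N+c_1$; at $t=0$ the $H_N'$ term drops and a layered evaluation of the cascade expectation reproduces the recursion \eqref{eq:ParPDE:eq1}--\eqref{eq:ParPDE:eq2}, so $\phi(0)=\log 2+\Pp_{\xi,h}(\mu)+c_0$, with the calibration giving $c_1-c_0=\tfrac12\int_0^1 s\xi''(s)\mu([0,s])\,ds$. Differentiating in $t$ and applying Gaussian integration by parts writes $-\phi'(t)$ as $\tfrac12\sum_l(m_{l+1}-m_l)\,\e\bigl\langle\xi(R_{1,2})-\xi(q_l)-\xi'(q_l)(R_{1,2}-q_l)\bigr\rangle_{t,l}$, which is nonnegative by convexity of $\xi$ on $[0,1]$ (itself from $\beta_p\ge 0$). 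Hence $\phi(1)\le\phi(0)$, proving the upper bound at every atomic $\mu$; Lipschitz continuity of $\mu\mapsto\Phi_\mu$ in $d$ extends the inequality to all $\mu\in\mathcal{M}$.

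\smallskip

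For the matching lower bound I would follow Panchenko's implementation of the Aizenman--Sims--Starr cavity scheme. The ASS identity expresses $\lim_N N^{-1}\e\log Z_N$ as $\limsup_N(\e\log Z_{N+1}-\e\log Z_N)$, and a direct cavity computation rewrites the right-hand side as an explicit functional of the asymptotic joint overlap distribution $\zeta$ of replicas drawn from the Gibbs measure $G_N$. To constrain $\zeta$ one adds to $H_N$ a small mixed $p$-spin perturbation, tuned so that it enforces the Ghirlanda--Guerra identities in the $N\to\infty$ limit while leaving the free energy unchanged. Panchenko's ultrametricity theorem then forces any such $\zeta$ to be ultrametric in distribution and hence generated by a Ruelle probability cascade associated with some $\mu_\star\in\mathcal{M}$. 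Inserting this cascade structure into the ASS functional reproduces exactly $\log 2+\Pp_{\xi,h}(\mu_\star)-\tfrac12\int_0^1 s\xi''(s)\mu_\star([0,s])\,ds$, so $\lim_N N^{-1}\e\log Z_N$ is at least the infimum in \eqref{ParisiFormulaA}.

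\smallskip

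The chief obstacle is the lower bound, and in particular Panchenko's ultrametricity theorem: the assertion that the GG identities alone force the limiting overlap array to be ultrametric. Its proof relies on a delicate invariance property of the Dovbysh--Sudakov random measure representing the array and is by far the deepest structural input. A secondary difficulty is calibrating the perturbation Hamiltonian, whose coefficients must decay slowly enough to enforce the GG identities in the limit but fast enough not to disturb the free energy; this is typically achieved with strengths of order $N^{-s}$ for a suitable $s\in(1/4,1/2)$ and a Fubini argument selecting a good sequence of perturbation parameters. Once these ingredients are in place, matching the ASS functional to Parisi's formula under a Ruelle cascade overlap is essentially Guerra's interpolation run in reverse.
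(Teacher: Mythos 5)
Theorem \ref{pf} is not proved in this paper: it is stated as background and attributed to Talagrand \cite{Tal06} and Panchenko \cite{P12}, and the authors' own contribution (Theorem \ref{thm2}, Corollary \ref{cor1}) takes the Parisi formula as given. With that caveat, your outline is a correct high-level account of how the cited proof goes: Guerra's replica-symmetry-breaking interpolation against a Ruelle probability cascade yields the upper bound for every atomic $\mu$, which extends to all of $\mathcal{M}$ by Lipschitz continuity of $\mu\mapsto\Phi_\mu$; the Aizenman--Sims--Starr cavity scheme combined with a Ghirlanda--Guerra perturbation and Panchenko's ultrametricity theorem gives the matching lower bound; Gaussian concentration of $N^{-1}\log Z_N$ then upgrades convergence of the mean to almost sure convergence; and the calibration you invoke is indeed $\frac12\int_0^1 s\xi''(s)\mu([0,s])\,ds=\frac12\sum_l m_l\bigl(\theta(q_{l+1})-\theta(q_l)\bigr)$ with $\theta(q)=q\xi'(q)-\xi(q)$. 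Two small imprecisions are worth noting. First, the Cesàro step for the lower bound needs $\liminf_N(\e\log Z_{N+1}-\e\log Z_N)$, not $\limsup$: superadditivity of $\e\log Z_N$ already gives existence of $\lim_N N^{-1}\e\log Z_N$, and the usable inequality is $\lim_N N^{-1}\e\log Z_N\geq\liminf_N(\e\log Z_{N+1}-\e\log Z_N)$. Second, Talagrand's original 2006 argument in \cite{Tal06} predates the ultrametricity theorem and obtains the lower bound for mixed even $p$-spin models by a different and considerably more intricate route; the ultrametricity-based lower bound you sketch is specifically Panchenko's \cite{P12}, and it is precisely what removes the evenness restriction. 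Since the paper under review offers no proof of Theorem \ref{pf} to compare against, nothing further can be said about fidelity to an in-paper argument.
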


This formula was predicted in the ground-breaking work of Parisi \cite{Par79,Par80} in the setting of the SK model. It was proved and generalized by Talagrand \cite{Tal06} to the mixed even $p$-spin models after the celebrated discovery of the replica symmetric breaking bound by Guerra \cite{Guerra03}. Later Panchenko \cite{P12} verified its validity in the mixed $p$-spin models including odd $p.$ We shall call a minimizer of \eqref{ParisiFormulaA} a Parisi measure throughout this paper. Parisi's prediction goes beyond the variational formula. In his picture, the Parisi measure is unique. It  also describes the limiting distribution of the overlap $R_{1,2}$ under $\e G_N^{\otimes 2}$ and encodes all information of the model. Mathematically, uniqueness of the Parisi measure was only known in the generic case, that is, when $\beta_p >0$ for all $p\geq 2$ (see Theorem 1.2 (c) in Talagrand \cite{Tal07}) and in the spherical version of the present model \cite{Tal06s}. 
\smallskip

As can be seen from \eqref{ParisiFormulaA}, the third term on the right-hand side is linear in $\mu$. Therefore the proof of the uniqueness of the Parisi measure is related to Talagrand's conjecture \cite{Tal07, Tal06} that the functional $\mathcal P_{\xi,h}$ is strictly convex. The first partial result along this direction was presented in Panchenko \cite{P08} where he established convexity between measures that stochastically dominate each other; his result was later pushed forward slightly by Chen \cite{C13} using a PDE approach. In this paper, based on a variational representation for the Parisi PDE solution, we give a complete solution to Talagrand's conjecture:

 \begin{theorem} \label{thm2}
 For any $\xi$ and $h,$ the Parisi functional $\mathcal{P}_{\xi,h}$ is strictly convex.

 \end{theorem}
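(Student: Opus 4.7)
My plan is to use a variational (Hamilton--Jacobi--Bellman) representation for the Parisi PDE solution so that $\mathcal{P}_{\xi,h}$ becomes a supremum of functionals individually convex in $\mu$, and then to upgrade convexity to strict convexity via the strict convexity of $\log\cosh$ combined with a martingale identity for the optimal control. Viewing \eqref{eq:ParPDE} as the HJB equation of a stochastic control problem, I expect the representation
\[
\mathcal{P}_{\xi,h}(\mu)=\sup_v\mathbb{E}\!\left[\log\cosh X^{v,\mu}(1)-\tfrac{1}{2}\!\int_0^1 \xi''(s)\mu([0,s])\,v(s)^2\,ds\right]=:\sup_v \mathcal{J}(v,\mu),
\]
where $v$ ranges over bounded progressively measurable controls for a Brownian motion $B$, and $X^{v,\mu}$ solves $dX^{v,\mu}(s)=\xi''(s)\mu([0,s])v(s)\,ds+\sqrt{\xi''(s)}\,dB(s)$, $X^{v,\mu}(0)=h$, with optimizer the Markov control $v^{*}(s)=\partial_x\Phi_\mu(s,X^{*}(s))$. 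For fixed $v$, the penalty is affine in $s\mapsto\mu([0,s])$, and once $v$ and $B$ are frozen, $X^{v,\mu}(1)$ is itself affine in $\mu([0,\cdot])$; strict convexity of $\log\cosh$ then makes $\mathcal{J}(v,\cdot)$ convex in $\mu$, and $\mathcal{P}_{\xi,h}$ is convex as a supremum of convex functionals.

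For strict convexity, fix $\mu_0\neq\mu_1$, $\theta\in(0,1)$, set $\mu_\theta=(1-\theta)\mu_0+\theta\mu_1$, and let $v^{*}$ be the optimizer for $\mu_\theta$. From
\[
\mathcal{P}_{\xi,h}(\mu_\theta)=\mathcal{J}(v^{*},\mu_\theta)\le(1-\theta)\mathcal{J}(v^{*},\mu_0)+\theta\mathcal{J}(v^{*},\mu_1)\le(1-\theta)\mathcal{P}_{\xi,h}(\mu_0)+\theta\mathcal{P}_{\xi,h}(\mu_1),
\]
the first inequality is strict (by strict convexity of $\log\cosh$) unless the random variable
\[
R:=\int_0^1 \xi''(s)\,v^{*}(s)\,\phi(s)\,ds=0\ \text{a.s.},\qquad \phi(s):=\mu_1([0,s])-\mu_0([0,s]),
\]
so it suffices to rule out $R\equiv0$ when $\phi\not\equiv0$. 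The key input is that $v^{*}$ is a martingale: differentiating \eqref{eq:ParPDE} in $x$ and applying It\^o's formula to $v^{*}(s)=\partial_x\Phi_{\mu_\theta}(s,X^{*}(s))$, the drift cancels, yielding $v^{*}(0)=\partial_x\Phi_{\mu_\theta}(0,h)$, $v^{*}(1)=\tanh X^{*}(1)$, and $dv^{*}(s)=\partial_{xx}\Phi_{\mu_\theta}(s,X^{*}(s))\sqrt{\xi''(s)}\,dB(s)$.

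Setting $F(s):=\int_0^s\xi''(r)\phi(r)\,dr$, integration by parts gives $R=v^{*}(1)F(1)-\int_0^1 F(s)\,dv^{*}(s)$, and substituting the martingale representation $\tanh X^{*}(1)=v^{*}(0)+\int_0^1\partial_{xx}\Phi_{\mu_\theta}(s,X^{*})\sqrt{\xi''}\,dB$ rearranges $R=0$ a.s.\ into
\[
\partial_x\Phi_{\mu_\theta}(0,h)\,F(1)=\int_0^1 (F(s)-F(1))\,\partial_{xx}\Phi_{\mu_\theta}(s,X^{*}(s))\sqrt{\xi''(s)}\,dB(s).
\]
The right side is a mean-zero stochastic integral, so both sides vanish; the It\^o isometry together with $\partial_{xx}\Phi_{\mu_\theta}>0$ (strict convexity of $\log\cosh$ propagates backwards through the PDE, by a maximum principle applied to the equation satisfied by $\partial_{xx}\Phi_\mu$) and $\xi''>0$ on $(0,1]$ forces $F(s)=F(1)$ for a.e.\ $s$. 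Continuity of $F$ and $F(0)=0$ then give $F\equiv0$, whence $\phi\equiv0$ and $\mu_0=\mu_1$, a contradiction. The principal technical obstacle is that $\Phi_\mu$ is only classically $C^{1,2}$ when $\mu$ has a smooth density (the PDE \eqref{eq:ParPDE} has jumps in $\partial_s$ across atoms of $\mu$), so the variational representation and all the It\^o calculations above must first be established for $\mu$ with a smooth positive density and then extended to general $\mu\in\mathcal{M}$ via the Lipschitz continuity of $\mu\mapsto\Phi_\mu$ recorded after \eqref{eq:ParPDE}; once that approximation step is carried out, the structural argument above goes through unchanged.
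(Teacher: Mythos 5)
Your proof is correct and shares the paper's overall strategy (a Bou\'e--Dupuis/HJB variational representation for $\Phi_\mu$, convexity as a supremum of functionals each convex in $\mu$, and the martingale property of $\partial_x\Phi_\mu$ along the optimal trajectory), but the strictness argument takes a genuinely different and shorter route. The paper's Theorem~\ref{thm1}(ii) compares the optimal trajectories of $\Phi_0$ and $\Phi_\lambda$: it first proves a uniqueness result for the maximizer (Proposition~\ref{prop1}, which needs $\alpha(s)>0$ and $\int_s^t\alpha\zeta\,dr<1$), deduces $u_0^*=u_\lambda^*$, and then invokes \emph{both} identities of Lemma~\ref{lem} --- the martingale representation \eqref{eq10} for $\partial_x\Phi$ \emph{and} the semimartingale decomposition \eqref{eq11} for $\partial_{xx}\Phi$ --- to force $\alpha_0=\alpha_1$, all organized by a two-stage argument over $[\tau',\tau)$ and then $[0,\tau')$. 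You instead stay with the single control problem attached to $\mu_\theta$: equality forces $X^{v^*,\mu_0}(1)=X^{v^*,\mu_1}(1)$ a.s.\ by strict convexity of $\log\cosh$, i.e.\ $R\equiv 0$; integration by parts, the single martingale identity $dv^*=\partial_{xx}\Phi_{\mu_\theta}\sqrt{\xi''}\,dB$, and It\^o's isometry together with $\partial_{xx}\Phi_{\mu_\theta}>0$ and $\xi''>0$ on $(0,1]$ then give $F\equiv 0$ and hence $\phi\equiv 0$. This bypasses Proposition~\ref{prop1}, the decomposition \eqref{eq11}, and the $\tau,\tau'$ bookkeeping; what you give up is the stronger statement of Theorem~\ref{thm1} (strict convexity of $(\mu,x)\mapsto\Phi_\mu(s,x)$ for \emph{all} $s<\tau$), but that is not needed for Theorem~\ref{thm2}. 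You correctly identify the one technical point requiring care: the It\^o computation needs $\Phi,\partial_x\Phi\in\mathcal C^{1,2}$, which Proposition~\ref{prop2}(iii) gives only for continuous $\alpha$, so one must extend the martingale identity to general $\mu\in\mathcal{M}$ by approximation; the paper's Lemma~\ref{lem} does exactly this, so the gap you flag is already filled by the paper's own machinery.
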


This directly implies

\begin{corollary}\label{cor1} For any $\xi$ and $h$, there exists a unique Parisi measure.
\end{corollary}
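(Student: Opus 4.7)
The plan is to deduce Corollary \ref{cor1} as a one-line consequence of Theorem \ref{thm2}, combined with elementary convex-analytic facts and existence built into Theorem \ref{pf}. Write the objective functional in the variational formula as
\begin{equation*}
F(\mu) := \log 2 + \mathcal{P}_{\xi,h}(\mu) - \frac{1}{2}\int_0^1 s\,\xi''(s)\,\mu([0,s])\,ds,
\end{equation*}
defined on $\mathcal{M}$. The domain $\mathcal{M}$ of probability measures on $[0,1]$ is convex, so the notion of strict convexity of $F$ on $\mathcal{M}$ is meaningful.

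First I would observe that the map $\mu \mapsto \int_0^1 s\,\xi''(s)\mu([0,s])\,ds$ is affine in $\mu$: for any $t\in[0,1]$ and $\mu_1,\mu_2\in\mathcal{M}$, the measure $\mu = t\mu_1+(1-t)\mu_2$ satisfies $\mu([0,s]) = t\mu_1([0,s]) + (1-t)\mu_2([0,s])$, and integrating against the fixed weight $s\xi''(s)$ preserves this linearity. Consequently, $F$ is the sum of the strictly convex functional $\mathcal{P}_{\xi,h}$ (by Theorem \ref{thm2}) and an affine functional, hence strictly convex on $\mathcal{M}$.

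Next, existence of a minimizer is already incorporated in Theorem \ref{pf}, where the variational formula is written with $\min$ rather than $\inf$. This is justified by the compactness of $(\mathcal{M}, d)$ together with the Lipschitz continuity of $\mu\mapsto \Phi_\mu$ recalled in the introduction (following Guerra \cite{Guerra03}), which makes $\mathcal{P}_{\xi,h}$ continuous; the linear third term is obviously continuous in $d$ as well.

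Finally, strict convexity rules out two distinct minimizers: if $\mu_1\neq\mu_2$ both attained the minimum, then taking $\mu_* = \tfrac{1}{2}(\mu_1+\mu_2)\in\mathcal{M}$ would give
\begin{equation*}
F(\mu_*) < \tfrac{1}{2}F(\mu_1)+\tfrac{1}{2}F(\mu_2) = \min_{\mu\in\mathcal{M}} F(\mu),
\end{equation*}
a contradiction. There is no genuine obstacle in this argument; the entire difficulty is absorbed into the proof of Theorem \ref{thm2}, and the passage from strict convexity to uniqueness of the Parisi measure is purely formal.
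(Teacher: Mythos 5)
Your argument is correct and is exactly the reasoning the paper intends by the phrase ``This directly implies'' after Theorem \ref{thm2}: the paper itself points out just before Theorem \ref{thm2} that the third term in \eqref{ParisiFormulaA} is linear in $\mu$, and existence of a minimizer is already packaged into Theorem \ref{pf}. Your write-up simply makes the standard convex-analysis deduction explicit, so there is no substantive difference from the paper's (implicit) proof.
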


As an immediate consequence of Corollary \ref{cor1}, we remark that one can identify the high temperature regime of the model as the collection of all $(\beta_{p})_{p\geq 2}$ and $h\in\mathbb{R}$ such that the corresponding Parisi measure is a Dirac measure. We refer the reader to Theorem 13.4.1 in Talagrand \cite{Tal11} for the characterization of the high temperature regime in the SK model. For physicists' predictions and rigorous qualitative properties about the Parisi measure, one may consult authors' recent work \cite{AChen}. We also remark that the proof of Theorem \ref{thm2} and Corollary \ref{cor1} can be extended to other models that share a similar characterization of the limiting free energy including, for instance, the Ghatak-Sherrington model \cite{Ghan,PK}. We do not pursue this direction here.
% and the multi-species spin glass model \cite{PMulti}. 

\smallskip

This paper is motivated by a beautiful variational representation for certain functionals of Brownian motion due to Bou\'{e}-Dupuis \cite{BD98} and some techniques from the functional geometry introduced by Borell \cite{Borell}. In the simplest case, the Bou\'{e}-Dupuis representation states that if $f$ is a measurable function on $\mathbb{R}$ that is bounded from below, then
\begin{align}\label{eq--3}
\log\e\exp f(z)=\sup_{v}\e\biggl[f\biggl(B(1)+\int_0^1 v(s)ds\biggr)-\frac{1}{2}\int_0^1v(s)^2ds\biggr],
\end{align}
where $z$ is a standard Gaussian random variable, $(B(s))_{0\leq s\leq 1}$ is the standard Brownian motion and the supremum is taken over all progressively measurable processes $v$ with respect to the filtration generated by the Brownian motion $(B(s))_{0\leq s\leq 1}.$ To see how \eqref{eq--3} comes into play in the proof of the convexity of the Parisi functional, in view of \eqref{eq:ParPDE:eq1} and \eqref{eq:ParPDE:eq2}, it seems reasonable to ask whether the following function is convex or not,
$$
\phi(m)=\frac{1}{m}\log \e\exp mf(z)
$$
for $m>0,$ where $f$ is a convex function that is bounded from below. To answer this question, applying \eqref{eq--3} gives
\begin{align}
\begin{split}
\frac{1}{m}\log\e\exp mf(z)&=\sup_{v}\e\biggl[f\biggl(B(1)+\int_0^1 v(s)ds\biggr)-\frac{1}{2m}\int_0^1v(s)^2ds\biggr]\notag
\end{split}\\
\begin{split}\label{eq--4}
&=\sup_{u}\e\biggl[f\biggl(B(1)+m\int_0^1 u(s)ds\biggr)-\frac{m}{2}\int_0^1u(s)^2ds\biggr],
\end{split}
\end{align}
where the second equality used a change of variable $v=mu.$ Let $m_0,m_1>0,$ $\lambda\in[0,1]$ and take $m=(1-\lambda)m_0+\lambda m_1.$ Observe that from the first term inside the expectation of \eqref{eq--4},
\begin{align*}
f\biggl(B(1)+m\int_0^1 u(s)ds\biggr)&\leq (1-\lambda)f\biggl(B(1)+m_0\int_0^1 u(s)ds\biggr)\\
&\qquad+\lambda f\biggl(B(1)+m_1\int_0^1 u(s)ds\biggr)
\end{align*}
since $f$ is convex, while the second term satisfies
\begin{align}\label{prop:eq2}
m\int_0^1u(s)^2ds&=(1-\lambda)m_0\int_0^1u(s)^2ds+\lambda m_1\int_0^1u(s)^2ds.
\end{align}
Using \eqref{eq--4}, these imply the convexity of $\phi,$ $\phi(m)\leq (1-\lambda)\phi(m_0)+\lambda \phi(m_1).$ It is of independent interest that indeed exactly the same argument also yields the following Gaussian inequality.

\begin{proposition}
Suppose that $F,G,H$ are nonnegative measurable functions on $\mathbb{R}$. If 
\begin{align}\label{prop:eq1}
F((1-\lambda)x+\lambda y)\leq G(x)^{1-\lambda}H(y)^\lambda
\end{align}
for any $x,y\in\mathbb{R}$ and $\lambda\in[0,1],$   
then for any $m_0,m_1>0$ and $\lambda\in [0,1]$, we have
\begin{align*}
(\e F(z)^{m_\lambda})^{1/m_\lambda}&\leq (\e G(z)^{m_0})^{(1-\lambda)/m_0}(\e H(z)^{m_1})^{\lambda/m_1},
\end{align*}
where $m_\lambda:=(1-\lambda)m_0+\lambda m_1$ and $z$ is a standard Gaussian random variable.
\end{proposition}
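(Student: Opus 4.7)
The plan is to mimic exactly the convexity argument that the authors used to motivate the Boué--Dupuis representation \eqref{eq--4}, applied to the function $f=\log F$ (respectively $g=\log G$, $h=\log H$). Taking logarithms, the desired inequality is equivalent to
$$\frac{1}{m_\lambda}\log\e F(z)^{m_\lambda}\le\frac{1-\lambda}{m_0}\log\e G(z)^{m_0}+\frac{\lambda}{m_1}\log\e H(z)^{m_1},$$
and the hypothesis \eqref{prop:eq1} translates, after taking logarithms, into the pointwise bound $f((1-\lambda)x+\lambda y)\le (1-\lambda)g(x)+\lambda h(y)$.

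Next I would apply \eqref{eq--4} on the left-hand side with $f=\log F$ and $m=m_\lambda$, obtaining
$$\frac{1}{m_\lambda}\log\e F(z)^{m_\lambda}=\sup_{u}\e\left[f\left(B(1)+m_\lambda\int_0^1 u(s)\,ds\right)-\frac{m_\lambda}{2}\int_0^1 u(s)^2\,ds\right].$$
For any progressively measurable $u$, set $A=\int_0^1 u(s)\,ds$ and write the key identity
$$B(1)+m_\lambda A=(1-\lambda)\bigl(B(1)+m_0 A\bigr)+\lambda\bigl(B(1)+m_1 A\bigr),$$
which is the analogue of the convex combination used in the convexity argument. The hypothesis then yields
$$f(B(1)+m_\lambda A)\le (1-\lambda)g(B(1)+m_0 A)+\lambda h(B(1)+m_1 A),$$
while the quadratic cost splits exactly as in \eqref{prop:eq2}:
$$\frac{m_\lambda}{2}\int_0^1 u^2\,ds=(1-\lambda)\frac{m_0}{2}\int_0^1 u^2\,ds+\lambda\frac{m_1}{2}\int_0^1 u^2\,ds.$$

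Combining these two displays and taking expectation, each of the two resulting terms is, for any fixed $u$, bounded above by the supremum over $u$ of the corresponding expression, which by \eqref{eq--4} applied with $(f,m)=(g,m_0)$ and $(f,m)=(h,m_1)$ equals $\frac{1}{m_0}\log\e G(z)^{m_0}$ and $\frac{1}{m_1}\log\e H(z)^{m_1}$ respectively. Taking the supremum over $u$ on the left completes the proof.

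The only subtlety is that \eqref{eq--3}--\eqref{eq--4} as stated require the exponent to be bounded from below, whereas the hypotheses allow $F,G,H$ to vanish, so $\log F$ etc.\ may equal $-\infty$; I would handle this by first replacing $F,G,H$ with $F\vee\varepsilon$, $G\vee\varepsilon$, $H\vee\varepsilon$ (which still satisfy \eqref{prop:eq1}), applying the argument above, and then letting $\varepsilon\downarrow 0$ by monotone convergence. This truncation step is the only place where one must be careful; the structural part of the argument is otherwise a direct translation of the convexity computation in the introduction and presents no real obstacle.
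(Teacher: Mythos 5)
Your proof is correct and follows essentially the same route as the paper's: set $f=\log F$, $g=\log G$, $h=\log H$, apply the pointwise convexity bound coming from \eqref{prop:eq1} together with the linear split of the quadratic cost as in \eqref{prop:eq2}, and conclude via the Bou\'{e}--Dupuis representation \eqref{eq--4}. The paper simply asserts that one may assume $f,g,h$ bounded below, whereas you spell out a truncation; this is fine, though the final passage to the limit should not be attributed to monotone convergence (since $F\vee\varepsilon$ \emph{decreases} to $F$ as $\varepsilon\downarrow 0$)---instead observe that $F\le F\vee\varepsilon$ already gives the desired bound on the left, and on the right use dominated convergence when $\e G(z)^{m_0}$ and $\e H(z)^{m_1}$ are finite and note the inequality is trivial otherwise.
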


\begin{proof}
Let $f=\log F,g=\log G$ and $h=\log H.$ Without loss of generality, we may assume that they are all bounded from below. From \eqref{prop:eq1}, 
\begin{align*}
f\biggl(B(1)+m_\lambda\int_0^1 u(s)ds\biggr)&\leq (1-\lambda)g\biggl(B(1)+m_0\int_0^1 u(s)ds\biggr)\\
&\qquad+\lambda h\biggl(B(1)+m_1\int_0^1 u(s)ds\biggr)
\end{align*}
This and \eqref{prop:eq2} completes our proof by using the variational formulas \eqref{eq--4} for $f,g,h$.
\end{proof}

The simplest case we explained above is essentially for proving the convexity of the Parisi functional for measures that are of the form $\mu=m\delta_0+(1-m)\delta_1$ for $0\leq m\leq 1,$ where $\delta_q$ denotes the Dirac measure at $q.$ We will show that using an iteration scheme together with an approximation procedure, there is a natural generalization of \eqref{eq--4} to the Parisi PDE solution for arbitrary measures. Thus, the argument above will allow us to deduce the convexity of the Parisi functional. The major difficulty throughout this paper is to justify the strict convexity. This part of the argument will need a more precise description of the optimizer in the variational problem and its subtle properties.
\smallskip

The rest of the paper is organized as follows. In Section \ref{sec2}, we establish a variational representation for the Parisi PDE solution. Moreover, we give an expression for the optimizer as well as a criterion for its uniqueness. Using these results, we will establish a general strict convexity for the Parisi PDE solution in Section \ref{sec3} and conclude immediately Theorem \ref{thm2}.
  
  \vspace{0.62cm}
\noindent {\bf Acknowledgements.} The second author thanks Dmitry Panchenko for introducing the problem of the strictly convexity of the Parisi functional to him. They are indebted to Louis-Pierre Arguin for fruitful discussions during the early stage of the project and to CIRM at Luminy for the hospitality during the workshop on disordered systems in 2013. Special thanks are due to Gregory Lawler for explaining the representation formula in Bou\'{e}-Dupuis \cite{BD98} to them. The authors also thank the anonymous referees for giving several suggestions and bringing \cite{F78} to their attention.

% % % % % % % % % % % % % % % % % % % % % % % % % % % % % % % % % % % % % % % % % % % % % % % % % % % % % % % % % % % % % % % % % % % % % % % % % % % %
% % % % % % % % % % % % % % % % % % % % % % % % % % % % % % % % % % % % % % % % % % % % % % % % % % % % % % % % % % % % % % % % % % % % % % % % % % % %% % % % % % % % % % % % % % % % % % % % % % % % % % % % % % % % % % % % % % % % % % % % % % % % % % % % % % % % % % % % % % % % % % % % % % % % % % % %% % % % % % % % % % % % % % % % % % % % % % % % % % % % % % % % % % % % % % % % % % % % % % % % % % % % % % % % % % % % % % % % % % % % % % % % % % % %

\section{A variational representation}\label{sec2}

Recall $\xi$ from \eqref{eq-1}. Define $\zeta=\xi''.$ Let $(B(r))_{r\geq 0}$ be a standard Brownian motion and $\p$ denote the Wiener measure. For $0\leq s \leq t \leq 1$, we denote by $\mathcal{D}[s,t]$ the space of all progressively measurable processes $u$ on $[s,t]$, with respect to the filtration generated by $(B(r))_{r\geq 0}$, that satisfy $\sup_{s\leq r\leq t}|u(r)|\leq 1$. We endow $\mathcal{D}[s,t]$ with the norm
$$\|u\|=\left(\e \int_s^t u(r)^2dr\right)^{1/2}.$$ 
The main result of this section is the following characterization.

\begin{theorem}[Variational formula] \label{thm0}
Let $\mu\in\mathcal{M}$ and $\alpha$ be its distribution function. Suppose that $\Phi$ is the Parisi PDE solution corresponding to $\mu$. Let $0\leq s<t\leq 1$. For any $x\in\mathbb{R}$ and $u\in \mathcal{D}[s,t],$ define
\begin{align}
\label{thm0:eq1}
F^{s,t}(u,x)=\e\left[C^{s,t}(u,x)-L^{s,t}(u)\right],
\end{align}
where
\begin{align}
\begin{split}
\label{thm0:eq2}
C^{s,t}(u,x)&=\Phi\left(t,x+\int_s^t\alpha(r)\zeta(r)u(r)dr+\int_s^t\zeta(r)^{1/2}dB(r)\right),\\
L^{s,t}(u)&=\frac{1}{2}\int_s^t\alpha(r)\zeta(r)u(r)^2dr.
\end{split}
\end{align} 
Then we have that
\begin{itemize} 
\item[$(i)$] \begin{equation}\label{MaxFormula}
\Phi(s,x)=\max\left\{F^{s,t}(u,x)|u\in \mathcal{D}[s,t]\right\}.
\end{equation}

\item[$(ii)$] The maximum in \eqref{MaxFormula} is attained by 
\begin{align}\label{max}
u^*(r)&=\partial_x\Phi(r,X(r)),
\end{align}
where $(X(r))_{s\leq r\leq t}$ is the strong solution to
\begin{align}
\begin{split}
\label{sde}
dX(r)&=\alpha(r)\zeta(r)\partial_x\Phi(r,X(r))dr+\zeta(r)^{1/2}dB(r),\\
X(s)&=x.
\end{split}
\end{align}
\end{itemize}

\end{theorem}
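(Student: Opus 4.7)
My plan is to establish the variational formula via a completion-of-the-square argument based on It\^o's formula applied to the Parisi PDE solution. First I would treat the case where $\mu$ has a continuous distribution function $\alpha$, so that $\Phi$ is a classical solution of the Parisi PDE \eqref{eq:ParPDE} on all of $[0,1)\times\mathbb{R}$, and then remove the smoothness by approximation. Fix $u\in\mathcal{D}[s,t]$ and set
$$Y_u(r)=x+\int_s^r\alpha(\rho)\zeta(\rho)u(\rho)\,d\rho+\int_s^r\zeta(\rho)^{1/2}dB(\rho).$$
Applying It\^o to $\Phi(r,Y_u(r))$ and substituting the Parisi PDE for $\partial_r\Phi$ causes the $\partial_{xx}\Phi$ terms to cancel, leaving
$$d\Phi(r,Y_u(r))=\alpha(r)\zeta(r)\Bigl(u(r)\partial_x\Phi-\tfrac12(\partial_x\Phi)^2\Bigr)dr+\zeta(r)^{1/2}\partial_x\Phi\,dB(r).$$

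Now I would complete the square via the identity $u\,\partial_x\Phi-\tfrac12(\partial_x\Phi)^2=\tfrac12 u^2-\tfrac12(\partial_x\Phi-u)^2$, integrate from $s$ to $t$, and take expectations. The stochastic integral is a true martingale because $|\partial_x\Phi|\le 1$ (a bound inherited from the $\tanh$ terminal condition through the recursion \eqref{eq:ParPDE:eq1}--\eqref{eq:ParPDE:eq2}) and $\zeta$ is bounded on $[0,1]$ under the summability assumption on $(\beta_p)$. Rearranging yields the fundamental identity
$$F^{s,t}(u,x)=\Phi(s,x)-\tfrac12\,\mathbb{E}\int_s^t\alpha(r)\zeta(r)\bigl(\partial_x\Phi(r,Y_u(r))-u(r)\bigr)^2 dr,$$
which gives $F^{s,t}(u,x)\le\Phi(s,x)$ with equality iff $u(r)=\partial_x\Phi(r,Y_u(r))$ almost everywhere. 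To realize this equality I would solve the SDE \eqref{sde}: its drift $\alpha(r)\zeta(r)\partial_x\Phi(r,\cdot)$ is bounded and Lipschitz in $x$ (since $\Phi$ is smooth with uniformly bounded $\partial_{xx}\Phi$), so standard theory delivers a unique strong solution $X(r)$. Then $u^*(r):=\partial_x\Phi(r,X(r))$ is bounded by $1$ and progressively measurable, hence in $\mathcal{D}[s,t]$, and by construction $Y_{u^*}=X$; substituting makes the error term vanish, giving both $(i)$ and $(ii)$.

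To pass to general $\mu\in\mathcal{M}$, I would handle atomic $\mu$ directly and then approximate. On each strip $[q_l,q_{l+1})\times\mathbb{R}$, $\alpha$ equals the constant $m_l$ and $\Phi$ satisfies \eqref{eq:ParPDE} classically, while $\Phi$ is continuous across each atom $q_l$ in the $s$-variable (as one checks from \eqref{eq:ParPDE:eq1}--\eqref{eq:ParPDE:eq2} at $s=q_l^-$). Running It\^o piecewise and telescoping across the atoms via this continuity reproduces the same identity. For arbitrary $\mu\in\mathcal{M}$, I would choose a sequence $\mu_n\in\mathcal{M}_d$ with $d(\mu_n,\mu)\to 0$; the Lipschitz continuity of $\mu\mapsto\Phi_\mu$ together with $|u|\le 1$ and boundedness of $\zeta$ yields $\sup_{u\in\mathcal{D}[s,t]}|F^{s,t}_{\mu_n}(u,x)-F^{s,t}_\mu(u,x)|\to 0$, so the suprema converge to $\Phi_\mu(s,x)$.

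The main obstacle I anticipate is the attainment claim $(ii)$ for non-smooth $\mu$: the drift of \eqref{sde} is only piecewise continuous in $r$ through $\alpha$, and $\partial_x\Phi$ may jump in $r$ at the atoms of $\mu$. Existence and uniqueness of the strong solution and the verification that $u^*\in\mathcal{D}[s,t]$ must therefore be assembled interval by interval, solving \eqref{sde} on each $[q_l,q_{l+1})$ with initial condition inherited from the previous piece. Moreover, uniform convergence $\Phi_{\mu_n}\to\Phi_\mu$ does not by itself imply convergence of the optimizers, so the attainment must be argued intrinsically from the fundamental identity rather than by passing to the limit in the approximation; this is where the explicit solvability of \eqref{sde} becomes essential.
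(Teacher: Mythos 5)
Your completion-of-square verification argument is a genuinely different and more unified route than the paper's. The paper proves part (i), the inequality $\Phi(s,x)\geq F^{s,t}(u,x)$, by a Girsanov change of measure on each atom interval followed by Jensen's inequality ($m^{-1}\log\e\exp mA\geq\e A$), and only then invokes It\^o's formula separately for part (ii). You instead derive the single exact identity
\begin{equation*}
F^{s,t}(u,x)=\Phi(s,x)-\tfrac12\,\e\int_s^t\alpha(r)\zeta(r)\bigl(u(r)-\partial_x\Phi(r,Y_u(r))\bigr)^2\,dr,
\end{equation*}
which yields (i) and (ii) simultaneously. This is clean, correct for continuous $\alpha$ (where $\Phi\in C^{1,2}$ by \eqref{prop2:eq4}), and extends to atomic $\mu$ by running It\^o piecewise on $[q_l,q_{l+1})$ and telescoping across atoms using continuity of $\Phi$ in $s$. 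Your approximation argument for the supremum (Lipschitz continuity of $\mu\mapsto\Phi_\mu$, boundedness of $u$ and $\zeta$) is the same as the paper's and gives $\Phi_\mu(s,x)=\sup_u F^{s,t}_\mu(u,x)$ for all $\mu$.

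However, your closing paragraph contains a genuine misstep on the attainment claim (ii) for general $\mu$. You write that the attainment ``must be argued intrinsically from the fundamental identity rather than by passing to the limit,'' but the fundamental identity relies on It\^o's formula, which requires $\Phi\in C^{1,2}$; for general $\mu$ this regularity is not available, so the ``intrinsic'' route is blocked precisely where you need it. Conversely, the limit-passing route that you dismiss \emph{does} work once one uses more than uniform convergence of $\Phi_{\mu_n}$: Proposition \ref{prop2}(iv) gives uniform convergence of $\partial_x\Phi_n\to\partial_x\Phi$ as well, and combining this with the Lipschitz bound $|\partial_{xx}\Phi_n|\leq1$ from \eqref{prop2:eq3} and a Gronwall argument yields $\sup_{s\leq r\leq t}|X_n(r)-X(r)|\to0$, hence $u_n^*\to u^*$ uniformly, hence $F^{s,t}_{\mu_n}(u_n^*,x)\to F^{s,t}_\mu(u^*,x)$ while $F^{s,t}_{\mu_n}(u_n^*,x)=\Phi_{\mu_n}(s,x)\to\Phi_\mu(s,x)$. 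This is exactly how the paper closes the argument; your proposal needs this piece to replace the vague ``intrinsic'' claim, otherwise (ii) is only proved for $\mu$ with continuous or purely atomic distribution function.
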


 \begin{remark}
\label{rmk1} \rm
$\partial_x\Phi$ and $\partial_{xx}\Phi$ exist and are continuous, see \eqref{prop2:eq1} below. As $|\partial_{xx}\Phi|\leq 1$ (see \eqref{prop2:eq3}), one obtains that for any $s,y_1,y_2,$
$$
|\alpha(s)\zeta(s)\partial_x\Phi(s,y_1)-\alpha(s)\zeta(s)\partial_x\Phi(s,y_2)|\leq \zeta(1)|y_1-y_2|.
$$ 
Using \cite[Proposition 2.13]{KS}, this ensures the existence of the strong solution $(X(r))_{s\leq r\leq t}$ for any $\mu$ and thus $u^*$ is a well-defined continuous process. Indeed, as one shall see from Lemma \ref{lem} below, $u^*$ is a continuous martingales. So one may as well replace $\mathcal{D}[s,t]$ by the space of all continuous martingale $u$ with respect to the filtration generated by the Brownian motion $(B(r))_{0\leq r\leq 1}$ and $\sup_{s\leq r\leq t}|u(r)|\leq 1.$
\end{remark}

Before we turn to the proof of Theorem \ref{thm0}, we summarize some properties about the Parisi PDE in the following proposition.

\begin{proposition}
\label{prop2}
Let $\mu\in\mathcal{M}$. Denote by $\alpha$ the distribution function and by $\Phi$ the Parisi PDE solution associated to $\mu$.  Then
\begin{itemize}
\item[$(i)$] For $0\leq j\leq 4,$ 
\begin{align}\label{prop2:eq1}
\mbox{$\partial_{x}^j\Phi$ exists and is continuous.}
\end{align} 
\item[$(ii)$] For all $(s,x)\in[0,1]\times\mathbb{R},$ 
\begin{align}
\begin{split}\label{prop2:eq2}
&|\partial_x\Phi(s,x)|\leq 1,
\end{split}\\
\begin{split}\label{prop2:eq3}
&\frac{C}{\cosh^2x}\leq \partial_{xx}\Phi(s,x)\leq 1,
\end{split}\\
\begin{split}\label{prop2:eq6}
&|\partial_{x}^3\Phi(s,x)|\leq 4,
\end{split}
\end{align}
where $C>0$ is a constant depending only on $\xi.$

\item[$(iii)$] If $\alpha$ is continuous on $[0,1],$ then 
\begin{align}
\label{prop2:eq4}
\Phi,\partial_x\Phi,\partial_{xx}\Phi\in \mathcal C^{1,2},
\end{align} 
where $\mathcal C^{1,2}$ is the space of all functions $f$ on $[0,1]\times\mathbb{R}$ with continuous $\partial_sf$ and $\partial_{xx}f.$ 

\item[$(iv)$] Suppose that $(\mu_n)_{n\geq 1}\in\mathcal{M}$ converges weakly to $\mu$ and $\Phi_n$ is the Parisi PDE solution associated to $\mu_n$. For $0\leq j\leq 2,$ uniformly on $[0,1]\times\mathbb{R}$,
\begin{align}\label{prop2:eq5}
\lim_{n\rightarrow\infty}\partial_{x}^j\Phi_n=\partial_{x}^j\Phi.
\end{align} 
\end{itemize}
\end{proposition}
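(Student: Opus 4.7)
The strategy is to prove all four statements first for atomic $\mu\in\mathcal{M}_d$ via a backward induction on the jump points $q_{k+1},q_k,\ldots,q_0$, and then transfer (i)–(iv) to general $\mu$ through the approximation provided by (iv) itself. For the base case $\Phi_\mu(1,x)=\log\cosh x$, every claim in (i)–(ii) is a direct calculation: $|\tanh|\le 1$, $1/\cosh^{2}x\in(0,1]$, and $|\partial_x^{3}\log\cosh|\le 2$. The inductive step consists of showing that each of the two building blocks of $\Phi_\mu$, namely the deterministic flow on $[q_{k+1},q_{k+2})$ (where only a Gaussian convolution with variance $\xi'(1)-\xi'(s)$ occurs) and the log-Laplace step $g\mapsto f$ with $f(x)=\frac{1}{m_l}\log\mathbb{E}\exp m_l g(x+z\sqrt{\Delta})$ for $\Delta=\xi'(q_{l+1})-\xi'(s)$ and $m_l=\mu([0,q_l])\in[0,1]$, preserves the relevant bounds and smoothness.

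For the log-Laplace step, introduce the tilted probability measure $\widetilde{\mathbb{E}}$ with density proportional to $e^{m_l g(x+z\sqrt{\Delta})}$. Differentiating under the integral, which is justified by the a priori growth $|g(x)|\le |x|+C_\xi$ coming from $|g'|\le1$, yields $f'(x)=\widetilde{\mathbb{E}}[g'(Y)]$ and $f''(x)=m_l\widetilde{\mathrm{Var}}(g'(Y))+\widetilde{\mathbb{E}}[g''(Y)]$ with $Y=x+z\sqrt{\Delta}$. The bound $|f'|\le1$ is then immediate from $|g'|\le1$. The crucial algebraic observation is
$$
(f')^{2}+f'' \;=\; (m_l-1)\,\widetilde{\mathrm{Var}}(g'(Y))+\widetilde{\mathbb{E}}\big[(g')^{2}+g''\big](Y),
$$
so with $m_l\le 1$ and the inductive hypothesis $(g')^{2}+g''\le 1$ (which holds at the terminal time with equality), the inequality $(f')^{2}+f''\le 1$ propagates; this gives $\partial_{xx}\Phi_\mu\le 1$. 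An analogous cumulant expansion, combined with $|\partial_x^{j}\Phi|\le\text{const}$ for $j\le 2$, bounds $|\partial_x^{3}\Phi|$ and $|\partial_x^{4}\Phi|$. On the deterministic intervals where $m_l=0$, $f$ is a heat-type convolution so all bounds are transparent and the smoothness in $s$ in (iii) is automatic.

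Part (iii) then follows because when $\alpha$ is continuous, inside each ``cell'' the recursion reduces to the Parisi PDE \eqref{eq:ParPDE} with constant coefficient $\mu([0,s])=\alpha(s)$ in the Hopf–Cole representation; the $x$-regularity established in (i) combined with the PDE itself yields joint continuity of $\partial_s\Phi$, $\partial_{xx}\Phi$, and differentiating the PDE in $x$ yields the same for $\partial_x\Phi$ and $\partial_{xx}\Phi$. For (iv), start from the known Lipschitz continuity $\|\Phi_{\mu_n}-\Phi_\mu\|_\infty\le C\,d(\mu_n,\mu)$ of Guerra \cite{Guerra03}, which since $d$ metrizes weak convergence on $\mathcal{M}$ gives the $j=0$ claim. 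For $j=1,2$, the uniform bounds on $\partial_x^{j+1}\Phi_{\mu_n}$ from (ii) imply equicontinuity of $\{\partial_x^{j}\Phi_{\mu_n}\}$ on each compact set, so any subsequential limit must coincide (by uniform convergence of $\Phi_{\mu_n}$) with $\partial_x^{j}\Phi_\mu$, and locally uniform convergence follows; boundedness of the derivatives then upgrades this to uniform convergence on $[0,1]\times\mathbb{R}$.

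The main obstacle is the pointwise lower bound $\partial_{xx}\Phi_\mu(s,x)\ge C/\cosh^{2}x$ in \eqref{prop2:eq3}. The upper-bound inequality $(f')^{2}+f''\le1$ does not by itself yield any positive lower bound, so one must show that the smoothing step $g\mapsto f$ preserves a quantitative coercivity of $g''$. The plan here is to work with the explicit representation $f''(x)\ge\widetilde{\mathbb{E}}[g''(Y)]$ and the inductive hypothesis $g''(y)\ge C/\cosh^{2}y$, and to show that $\widetilde{\mathbb{E}}[1/\cosh^{2}(x+z\sqrt{\Delta})]\ge C'/\cosh^{2}x$ for some constant depending only on $\xi$. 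Since the tilting weight $e^{m_l g}$ is Lipschitz in the exponent with constant $m_l\le 1$, the tilted law remains a subgaussian perturbation of $\mathcal{N}(x,\Delta)$, and a direct change-of-variable estimate using $1/\cosh^{2}(x+w)\ge c\,e^{-2|w|}/\cosh^{2}x$ together with a uniform Gaussian moment bound on $|w|$ gives the desired comparison, with a constant $C$ depending only on $\xi'(1)$.
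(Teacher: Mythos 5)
The paper does not reprove these facts: items (i), (ii) (except the third-derivative bound), and (iv) are cited from Propositions~1 and~2 of \cite{AChen}, the bound $|\partial_x^3\Phi|\le 4$ is cited from $(14.272)$ of \cite{Tal11} together with (iv), and only (iii) is derived directly, by observing that continuity of $\alpha$ makes the right-hand side of the Parisi PDE \eqref{eq:ParPDE} continuous in $(s,x)$. Your proposal is therefore a genuinely different, self-contained route. The high-level plan (establish the estimates on $\mathcal M_d$ by backward induction through the log-Laplace recursion, then pass to general $\mu$ by Guerra's Lipschitz bound and Arzel\`a--Ascoli) is sensible, and the identity
$(f')^2+f''=(m_l-1)\,\widetilde{\mathrm{Var}}(g'(Y))+\widetilde\e\bigl[(g')^2+g''\bigr](Y)$
is correct and does propagate the upper bound $\partial_{xx}\Phi\le 1$ exactly as you say.

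There is, however, a genuine gap in your treatment of the lower bound $\partial_{xx}\Phi(s,x)\ge C/\cosh^2 x$, which you yourself flag as the main obstacle. Your per-step estimate is
$\widetilde\e[1/\cosh^2(x+z\sqrt{\Delta_l})]\ge c_l/\cosh^2 x$
with $c_l$ obtained from the pointwise inequality $1/\cosh^2(x+w)\ge e^{-2|w|}/\cosh^2 x$ and a comparison of the tilted law with $\mathcal N(0,1)$. This $c_l$ is of the form $1-\Theta(\sqrt{\Delta_l})$ as $\Delta_l\to 0$, because the bound $e^{-2|w|}$ is linear in $|w|$ and $\e|z|\sqrt{\Delta_l}\sim\sqrt{\Delta_l}$. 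When you iterate over the jumps of $\mu$, the accumulated constant is $\prod_l c_l$, and since $\sum_l\sqrt{\Delta_l}$ is not controlled by $\sum_l\Delta_l=\xi'(1)$ (e.g.\ $k$ equal steps give $\sum_l\sqrt{\Delta_l}\sim\sqrt{k\,\xi'(1)}\to\infty$), the product degenerates to $0$ as the partition is refined. Consequently the constant $C$ you obtain for $\mu\in\mathcal M_d$ is \emph{not} uniform over $\mathcal M_d$, and the final approximation step to general $\mu$ collapses. The loss per step must be $O(\Delta_l)$, not $O(\sqrt{\Delta_l})$; this can be achieved either by a second-order Taylor expansion of $x\mapsto 1/\cosh^2 x$ under the tilted Gaussian (using that $h'/h$ and $h''/h$ are bounded for $h=\mathrm{sech}^2$ and that $\widetilde\e[z]=O(\sqrt{\Delta_l})$ by Gaussian integration by parts, so the first-order term already contributes $O(\Delta_l)$), or, more cleanly, by reading the recursion as a single change of measure over the whole interval $[s,1]$ so that the total drift and the total Gaussian variance are each bounded by $\xi'(1)$ and only one exponential factor $e^{-C\xi'(1)}$ is paid. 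A similar, smaller concern applies to your bound on $\partial_x^3\Phi$: ``an analogous cumulant expansion'' does not obviously produce an iteration that closes with a constant independent of the number of atoms, and this needs to be checked with the same care.

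Two smaller points. First, you appeal to (iv) in order to transfer (i) to general $\mu$, but for $j=3,4$ you have no stated uniform bound on $\partial_x^5\Phi_n$ that would give equicontinuity of $\partial_x^4\Phi_n$; the existence claim for $j=4$ in the limit therefore needs an additional argument. Second, in (iv) the passage from locally uniform to globally uniform convergence on $[0,1]\times\mathbb R$ is asserted from ``boundedness of the derivatives,'' but boundedness alone is not enough; you need to use the decay of $\partial_x^j\Phi$ for $j\ge 1$ as $|x|\to\infty$ (for instance via the lower bound on $\partial_{xx}\Phi$ and the upper bound $(\partial_x\Phi)^2+\partial_{xx}\Phi\le 1$).
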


\begin{proof}
Statements \eqref{prop2:eq1}, \eqref{prop2:eq2}, \eqref{prop2:eq3} and \eqref{prop2:eq5} are parts of the results of Proposition $1$ and $2$ in \cite{AChen}. As for \eqref{prop2:eq6}, it follows from $(14.272)$ in \cite{Tal11} and \eqref{prop2:eq5}. For \eqref{prop2:eq4}, note that the continuity of $\alpha$ gives that
\begin{align*}
\partial_s\Phi(s,x)&=-\frac{\zeta(s)}{2}(\partial_{xx}\Phi(s,x)+\alpha(s)(\partial_x\Phi(s,x))^2)
\end{align*}
is continuous for all $(s,x)\in[0,1]\times \mathbb{R}.$ This and \eqref{prop2:eq1} give \eqref{prop2:eq4}.
\end{proof}

The first step to prove Theorem \ref{thm0} is the following lemma about  atomic measures.

\begin{lemma}\label{lem2}
 Let $\mu\in\mathcal{M}_d$ and $\alpha$ be its distribution function. Let $s,t\in[0,1]$ with $s<t$ be both jump points of $\mu$. Using the notations of Theorem \ref{thm0}, we have 
\begin{align}
\label{lem1:eq1}
\Phi(s,x)&\geq \sup\{F^{s,t}(u,x)|u\in\mathcal{D}[s,t]\}.
\end{align}
\end{lemma}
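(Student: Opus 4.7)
The plan is to apply Itô's formula to $\Phi(r, Y(r))$ on each interval between consecutive jumps of $\mu$ in $[s,t]$ and use the Parisi PDE on those intervals to control the drift term.

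Fix $u \in \mathcal{D}[s,t]$ and introduce the controlled diffusion
$$Y(r) = x + \int_s^r \alpha(w)\zeta(w)u(w)\,dw + \int_s^r \zeta(w)^{1/2}\,dB(w), \qquad r \in [s,t],$$
so that $Y(t)$ is exactly the spatial argument appearing inside $C^{s,t}(u,x)$. Since $s,t$ are jump points, write $s=q_{j_0}$ and $t=q_{j_1}$; on each strip $[q_l,q_{l+1})$ with $j_0\leq l<j_1$ the distribution function $\alpha$ is constant equal to $m_l$, and the averaging formula \eqref{eq:ParPDE:eq2} exhibits $\Phi$ as a Gaussian convolution of $\Phi(q_{l+1},\cdot)$. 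Hence $\Phi\in\mathcal{C}^{1,2}$ on each such strip and solves \eqref{eq:ParPDE} there with $\mu([0,\cdot])=m_l$. Applying Itô's formula to $\Phi(r,Y(r))$ on $(q_l,q_{l+1})$, substituting the PDE for $\partial_r\Phi$, and completing the square in the drift yields
$$d\Phi(r,Y(r)) = \frac{\alpha(r)\zeta(r)}{2}\Bigl(u(r)^2 - (\partial_x\Phi(r,Y(r))-u(r))^2\Bigr)dr + \zeta(r)^{1/2}\partial_x\Phi(r,Y(r))\,dB(r).$$

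Because $Y$ has continuous paths and $\Phi$ is globally continuous on $[0,1]\times\mathbb{R}$ (by the Lipschitz extension attributed to Guerra), the values of $\Phi(r,Y(r))$ match from the left and right at each jump $q_{l+1}$, so the local identities telescope into
$$\Phi(t,Y(t)) - \Phi(s,x) = \int_s^t \frac{\alpha(r)\zeta(r)}{2}\Bigl(u(r)^2 - (\partial_x\Phi(r,Y(r))-u(r))^2\Bigr)dr + \int_s^t \zeta(r)^{1/2}\partial_x\Phi(r,Y(r))\,dB(r).$$
Taking expectations annihilates the stochastic integral (since $|\partial_x\Phi|\leq 1$ by \eqref{prop2:eq2} and $\zeta$ is bounded), so after rearrangement
$$F^{s,t}(u,x) = \Phi(s,x) - \frac{1}{2}\,\mathbb{E}\int_s^t \alpha(r)\zeta(r)\bigl(\partial_x\Phi(r,Y(r))-u(r)\bigr)^2 dr \;\leq\; \Phi(s,x),$$
which is exactly \eqref{lem1:eq1}.

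The only real difficulty is bookkeeping: one must stitch the Itô calculation across the finitely many jumps of $\mu$ in $(s,t)$ and make sure no boundary contributions are dropped, which is handled by the global continuity of $\Phi$. As a byproduct the identity pinpoints the extremizer: equality is attained precisely when $u(r)=\partial_x\Phi(r,Y(r))$ almost surely, which is the fixed-point relation \eqref{max} that will characterize the maximizer in the full Theorem \ref{thm0}.
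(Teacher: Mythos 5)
Your proof is correct, but it takes a genuinely different route from the paper's argument for this lemma. The paper proves Lemma~\ref{lem2} by a Girsanov change of measure on each strip $[q_l,q_{l+1})$, followed by the Jensen inequality $m^{-1}\log\e\exp mA\geq \e A$ (exploiting the explicit recursive formula \eqref{eq:ParPDE:eq2}), and then an iteration from $l=a$ to $b-1$. You instead apply It\^{o}'s formula to $\Phi(r,Y(r))$ on each interior strip, substitute the Parisi PDE \eqref{eq:ParPDE} for $\partial_r\Phi$, complete the square in the drift, and stitch the resulting identities across the jump points using the global continuity of $\Phi$ and of the path $Y$. Your computation
\[
-\tfrac{\alpha\zeta}{2}(\partial_x\Phi)^2+\alpha\zeta u\,\partial_x\Phi
=\tfrac{\alpha\zeta}{2}\Bigl(u^2-(\partial_x\Phi-u)^2\Bigr)
\]
is right, and the residual formula
\[
F^{s,t}(u,x)=\Phi(s,x)-\tfrac12\,\e\!\int_s^t\alpha(r)\zeta(r)\bigl(\partial_x\Phi(r,Y(r))-u(r)\bigr)^2\,dr
\]
yields the bound and pinpoints the maximizer in a single stroke.

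What each approach buys: the paper's Girsanov--Jensen argument stays at the level of the defining recursion, does not need any regularity of $\Phi$ beyond measurability of $\Phi(q_{l+1},\cdot)$, and gives the inequality directly; but it does not identify the optimizer, which the paper then recovers separately via It\^{o} in the proof of Theorem~\ref{thm0}(ii) (after first passing to continuous $\alpha$). Your It\^{o}-plus-completion-of-square proof produces the sharper equality with explicit nonnegative remainder, so it proves the lemma and at the same time exhibits $u^*(r)=\partial_x\Phi(r,Y(r))$ as the unique choice of control that makes the remainder vanish (from which the SDE \eqref{sde} follows since then $Y=X$). The only technical cost is that you must invoke $\Phi\in\mathcal{C}^{1,2}$ on each interior strip, the boundedness $|\partial_x\Phi|\leq 1$, $|\partial_{xx}\Phi|\leq 1$ from \eqref{prop2:eq2}--\eqref{prop2:eq3} to justify the stochastic-integral term having zero mean, and the continuity of $\Phi$ in both arguments to match values at the jump times; you correctly flag all of these. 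In short: same destination, cleaner map, at the price of slightly more regularity used up front.
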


\begin{proof} 
Suppose that $\mu$ has exactly $k+1$ jumps at $(q_{l})_{1\leq l\leq k+1}$ with $\mu([0,q_l])=m_l$, where $(q_l)_{1\leq l \leq k+1}$ and $(m_l)_{1\leq l \leq k+1}$ satisfy
 \begin{align*}
 &0\leq m_1<m_2<\cdots<m_{k}< m_{k+1}=1,\\
 &0\leq q_1<q_2<\cdots<q_k<q_{k+1}\leq 1.
 \end{align*}
 and that for some $1\leq a< b \leq k+1$ we have $$q_a= s,\,\,q_{b} = t.$$
Set $m_0 = q_0 =0$ and $q_{k+2}=1$. As we have discussed in the Section $1$, $\Phi$ is defined through \eqref{eq:ParPDE:eq1} and \eqref{eq:ParPDE:eq2}. Using the standard Brownian motion $(B(r))_{r\geq 0}$, for each $a\leq l\leq b-1,$ we can write
\begin{align*}
\Phi(q_l,x)&=\frac{1}{m_l}\log \e\exp m_l\Phi\left(q_{l+1},x+\int_{q_l}^{q_{l+1}}\zeta(r)^{1/2}dB(r)\right).
\end{align*}
Let $u\in\mathcal{D}[s,t].$ Set
\begin{align*}
Z_l&=\exp\left(-\frac{1}{2}\int_{q_l}^{q_{l+1}}m_l^2u(r)^2\zeta(r)dr-\int_{q_l}^{q_{l+1}}m_lu(r)\zeta(r)^{1/2}dB(r)\right).
\end{align*}
Define $d\tilde{\p}=Z_ld\p$ and $\tilde{B}(r)=\int_{q_l}^{r}m_lu(a)\zeta(a)^{1/2}da+B(r).$ We use $\tilde{\e}$ to denote the expectation with respect to $\tilde{\p}.$
The Girsanov theorem \cite[Theorem 5.1]{KS} says 
\begin{align*}
&\e\exp m_l\Phi\left(q_{l+1},x+\int_{q_l}^{q_{l+1}}\zeta(r)^{1/2}dB(r)\right)\\
&=\tilde{\e}\exp m_l\Phi\left(q_{l+1},x+\int_{q_l}^{q_{l+1}}\zeta(r)^{1/2}d\tilde{B}(r)\right)\\
&=\e\left[\exp m_l\Phi\left(q_{l+1},x+\int_{q_l}^{q_{l+1}}m_lu(r)\zeta(r)dr+\int_{q_l}^{q_{l+1}}\zeta(r)^{1/2}d{B}(r)\right)\right.\\
&\qquad \left.\cdot \exp\left(-\frac{1}{2}\int_{q_l}^{q_{l+1}}m_l^2u(r)^2\zeta(r)dr-\int_{q_l}^{q_{l+1}}m_lu(r)\zeta(r)^{1/2}dB(r)\right)\right].
\end{align*}
From Jensen's inequality $m^{-1}\log \e \exp mA\geq \e A$ for any random variable $A$ and  $m>0$ and  noting that $m_l=\alpha(r)$ for $q_l\leq r<q_{l+1}$, it follows
\begin{align*}
\Phi(q_l,x)
&\geq\e \left[\Phi\left(q_{l+1},x+\int_{q_l}^{q_{l+1}}\alpha(r)u(r)\zeta(r)dr+\int_{q_l}^{q_{l+1}}\zeta(r)^{1/2}d{B}(r)\right)\right.\\
&\qquad\qquad\qquad\left.
-\frac{1}{2}\int_{q_l}^{q_{l+1}}\alpha(r)u(r)^2\zeta(r)dr\right]
\end{align*}
for all $a\leq l\leq b-1.$ Using this and conditional expectation, an iteration argument on $l$ from $a$ to $b-1$ gives
\begin{align*}
\Phi(s,x)&=\Phi(q_a,x)\\
&\geq \e\left[\Phi\left(q_{b},x+\sum_{l=a}^{b-1}\int_{q_l}^{q_{l+1}}\alpha(r)u(r)\zeta(r)dr+\sum_{l=a}^{b-1}\int_{q_l}^{q_{l+1}}\zeta(r)^{1/2}d{B}(r)\right)\right.\\
&\qquad\qquad\qquad\left.
-\frac{1}{2}\sum_{l=a}^{b-1}\int_{q_l}^{q_{l+1}}\alpha(r)u(r)^2\zeta(r)dr\right]\\
&=F^{q_a,q_{b}}(u,x)\\
&=F^{s,t}(u,x).
\end{align*}
Since this is true for arbitrary $u\in\mathcal{D}[s,t],$ this finishes our proof.
\end{proof}

\begin{proof}[Proof of Theorem \ref{thm0}]
First, we claim that for any $\mu\in\mathcal{M},$
\begin{align}
\label{eq-4}
\Phi(s,x)\geq \sup\left\{F^{s,t}(u,x)|u\in\mathcal{D}[s,t]\right\}.
\end{align}
Pick a sequence $(\mu_n)_{n\geq 1}\in \mathcal{M}_d$ that converges weakly to $\mu$ and have jumps at $s$ and $t$. Denote by $\alpha_n$ the distribution function and by $\Phi_n$ the Parisi PDE solution associated to $\mu_n$. Since $(\alpha_n)_{n\geq 1}$ converges almost everywhere to $\alpha,$ the uniform boundedness of $u\in\mathcal{D}[s,t]$ and the dominated convergence theorem give
\begin{align*}
\int_s^t \alpha_n(r) \zeta(r)u(r) dr \rightarrow \int_s^t \alpha(r) \zeta(r)u(r) dr,\\
 \int_{s}^{t} \alpha_n(r) \zeta(r)u(r)^2 dr  \rightarrow   \int_{s}^{t} \alpha(r) \zeta(r)u(r)^2 dr
\end{align*}
almost surely.
From \eqref{prop2:eq5}, it implies that the sequence of functionals $(F_n^{s,t})_{n\geq 1}$ associated to $(\alpha_n)_{n\geq 1}$ converges uniformly to $F^{s,t}$ and therefore Lemma \ref{lem2} gives \eqref{eq-4}. 

\smallskip

With the help of this claim, our proof will be finished if we show that $u^*$ is a maximizer and equality of \eqref{eq-4} holds. We check the case that $\alpha$ is continuous first. 
Define for $s\leq r\leq t,$
\begin{align*}
Y(r)&=\Phi(r,X(r))-\frac{1}{2}\int_s^r\alpha(v)\zeta(v)u^*(v)^2dv-\int_{s}^ru^*(v)\zeta(v)^{1/2}dB(v).
\end{align*}
Since $\Phi\in \mathcal C^{1,2}$ by \eqref{prop2:eq4}, we obtain from It\^{o}'s formula \cite[Theorem 3.6]{KS} and \eqref{eq:ParPDE},
\begin{align*}
d\Phi&=(\partial_s\Phi)dr+(\partial_x\Phi) dX+\frac{1}{2}(\partial_{xx}\Phi) \zeta dr\\
&=\left(\partial_s\Phi +\alpha\zeta\left(\partial_x\Phi \right)^2+\frac{1}{2}\zeta (\partial_{xx}\Phi) \right)dr+\zeta^{1/2}(\partial_x\Phi)dB\\
&=\frac{1}{2}\alpha\zeta\left(\partial_x\Phi\right)^2 dr+\zeta^{1/2}(\partial_x\Phi) dB\\
&=\frac{1}{2}\alpha\zeta (u^*)^2dr+\zeta^{1/2}u^*dB,
\end{align*}
which implies
\begin{align}\label{eq6}
dY&=d\Phi-\frac{1}{2}\alpha \zeta(u^*)^2dr-\zeta^{1/2}u^*dB=0.
\end{align}
Since
\begin{align*}
Y(s)&=\Phi(s,X(s))=\Phi(s,x)
\end{align*}
and
\begin{align*}
Y(t)=\Phi(t,X(t))-\frac{1}{2}\int_s^t\alpha(v)\zeta(v)u^*(v)^2dv-\int_{s}^tu^*(v)\zeta(v)^{1/2}dB(v),
\end{align*}
if we take expectation in the equation above, it follows that from \eqref{eq6}, 
\begin{align}\label{eq-7}
F^{s,t}(u^*,x)=\e Y(t)=\e Y(s)+\e \int_s^tdY(r)=\Phi(s,x).
\end{align}
This means that $u^*$ is a maximizer. As for arbitrary $\alpha,$ let us pick a sequence of probability measures $(\mu_n)_{n\geq 1}\subset\mathcal{M}$ such that the distribution function of each $\mu_n$ is continuous and $(\mu_n)_{n\geq 1}$ has weak limit $\mu$. Denote by $(\alpha_n)_{n\geq 1}$, $(\Phi_n)_{n\geq 1}$, $(X_n)_{n\geq 1}$ and $(u_n^*)_{n\geq 1}$ the distribution functions, the Parisi PDE solutions, the SDE solutions \eqref{sde} and the maximizers \eqref{max} associated to $(\alpha_n)_{n\geq 1}.$ Write 
\begin{equation*}
\begin{split}
&|X_n(r) - X(r)|\\
 &= \bigg |\int_{s}^r \alpha_n(v)\zeta(v) \partial_x \Phi_n(v,X_n(v)) dv - \int_{s}^r \alpha(v) \zeta(v)\partial_x \Phi(v,X(v)) dv\bigg| \\
&\leq \int_{s}^r \zeta(v)|\alpha_n(v) - \alpha(v)| |\partial_x \Phi_n(v,X(v))| + \alpha(v)\zeta(v)|\partial_x \Phi_n(v,X_n(v)) - \partial_x \Phi(v,X(v))| dv \\
& \leq C \int_{s}^r |\alpha_n(v) - \alpha(v)| |\partial_x \Phi_n(v,X(v))| + |\partial_x \Phi_n(v,X_n(v)) - \partial_x \Phi(v,X(v))| dv
\end{split}
\end{equation*}
for some $C>0$, where in the last line we used the fact that $\zeta$ is bounded above by $\zeta(1)$ and $\alpha \leq 1$.
From \eqref{prop2:eq2} and the almost everywhere convergence of $(\alpha_n)_{n\geq 1}$, the dominated convergence theorem tells us that the first term in the last line converges to zero. As for the second term, the mean value theorem, \eqref{prop2:eq3} and \eqref{prop2:eq5} imply that for given $\varepsilon >0$, if $n$ is large enough,
\begin{equation*}
\begin{split}
& \int_s^r\alpha(v)|\partial_x \Phi_n(v,X_n(v)) - \partial_x \Phi(v,X(v))| dv\\
&\leq   \int_{s}^r|\partial_x \Phi(v,X_n(v)) - \partial_x \Phi_n(v,X_n(v))| dv +\int_{s}^r|\partial_x \Phi(v,X(v)) - \partial_x \Phi(v,X_n(v))| dv \\
&\leq  \varepsilon+ \int_{s}^r |X_n(v)-X(v)| dv 
\end{split}
\end{equation*}
for all $s\leq r\leq t.$ Applying the Gronwall inequality, we conclude that for large enough $n$, $|X_n(r)-X(r)| \leq \varepsilon e^{Cr}$ for all $s\leq r\leq t$. Thus, $$
\lim_{n\rightarrow\infty}\sup_{s\leq r\leq t}|X_n(r)-X(r)|=0.
$$
and so from \eqref{prop2:eq5}, $(u_n^*)_{n\geq 1}$ converges to $u^*$ uniformly on $[s,t]$. This combining with \eqref{prop2:eq5} and the almost everywhere convergence of $(\alpha_n)_{n\geq 1}$ to $\alpha$ implies that \eqref{eq-7} is also true for $\mu.$ In other words, $u^*$ is a maximizer. This ends our proof.
\end{proof}

\begin{remark}
\rm  After submission of the paper, it came to our attention that the arguments of Lemma \ref{lem2} and Theorem \ref{thm0} seem to be very similar to the proof of the finite time analogue of \cite[Theorem 2.1]{F78} on pages $341-342.$
\end{remark}

\begin{proposition}[Uniqueness of $u^*$]
\label{prop1}
Let $\mu\in\mathcal{M}$ and $\alpha$ be its distribution function. Let $0<s<t\leq 1$ with $\alpha(s)>0.$ Suppose that $\Phi$ is the Parisi PDE solution corresponding to $\mu.$ If $\int_s^t\alpha(r)\zeta(r)dr<1$, then the maximizer $u^*$ given by \eqref{max} for the variational representation \eqref{MaxFormula} is unique.
\end{proposition}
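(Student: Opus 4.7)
The plan is to establish strict concavity of $F^{s,t}(\cdot,x)$ along any line segment between two distinct elements of $\mathcal{D}[s,t]$, from which uniqueness of the maximizer is immediate. Indeed, if $u_1,u_2\in\mathcal{D}[s,t]$ were two distinct maximizers, their midpoint $u_{1/2}=(u_1+u_2)/2$ would again lie in $\mathcal{D}[s,t]$ (the class is convex), and strict concavity would yield $F^{s,t}(u_{1/2},x) > \Phi(s,x)$, contradicting \eqref{MaxFormula}. Since the $u^*$ in \eqref{max} is already known to be one maximizer, this forces all maximizers to coincide with it.

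To run this argument I set $u_\lambda=(1-\lambda)u_1+\lambda u_2$, $g(\lambda)=F^{s,t}(u_\lambda,x)$, and $A_\lambda = x+\int_s^t \alpha(r)\zeta(r)u_\lambda(r)\,dr+\int_s^t \zeta(r)^{1/2}\,dB(r)$. The uniform bounds $|\partial_x\Phi|, |\partial_{xx}\Phi|\leq 1$ from Proposition \ref{prop2}, together with $|u_\lambda|\leq 1$, justify differentiating under the expectation twice, producing
\begin{align*}
g''(\lambda) = \e\left[\partial_{xx}\Phi(t,A_\lambda)\left(\int_s^t \alpha\zeta(u_2-u_1)\,dr\right)^{\!2}\right] - \e\int_s^t \alpha\zeta(u_2-u_1)^2\,dr.
\end{align*}
Applying $\partial_{xx}\Phi\leq 1$ and the pathwise Cauchy--Schwarz bound $\bigl(\int_s^t \alpha\zeta(u_2-u_1)\,dr\bigr)^2\leq \int_s^t \alpha\zeta\,dr\cdot\int_s^t \alpha\zeta(u_2-u_1)^2\,dr$ then gives
\begin{align*}
g''(\lambda) \leq \left(\int_s^t \alpha(r)\zeta(r)\,dr - 1\right)\e\int_s^t \alpha(r)\zeta(r)(u_2-u_1)^2\,dr.
\end{align*}
The hypothesis $\int_s^t \alpha\zeta\,dr<1$ makes the prefactor strictly negative. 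Since $\alpha$ is nondecreasing, $\alpha(r)\geq\alpha(s)>0$ on $[s,t]$, and since some $\beta_p>0$ together with $s>0$ gives $\zeta(r)>0$ on $[s,t]$, the weight $\alpha\zeta$ is bounded away from $0$ on $[s,t]$. Consequently, whenever $\e\int_s^t (u_2-u_1)^2\,dr>0$, the right-hand side above is strictly negative, so $g''<0$ on $[0,1]$ and $g$ is strictly concave, as required.

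The main obstacle I anticipate is the rigorous justification of the two differentiations under the expectation used to obtain the formula for $g''$. This should be handled by standard dominated-convergence arguments, the integrable dominants being supplied by the global bounds $|\partial_x\Phi|,|\partial_{xx}\Phi|\leq 1$ and $|u_\lambda|\leq 1$, combined with the continuity of $\partial_x\Phi$ and $\partial_{xx}\Phi$ from Proposition \ref{prop2}; note in particular that this continuity holds even when $\mu$ is atomic, where $\Phi$ solves the Parisi PDE only piecewise, so no extra regularization of $\mu$ is required for the argument.
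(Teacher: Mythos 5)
Your proof is correct and follows essentially the same route as the paper: show that $F^{s,t}(\cdot,x)$ is strictly concave on $\mathcal D[s,t]$ by differentiating twice along the segment $u_\lambda=(1-\lambda)u_0+\lambda u_1$, bound the second derivative via $\partial_{xx}\Phi\leq 1$ (from Proposition \ref{prop2}) and the Cauchy--Schwarz inequality, and exploit $\alpha(r)\geq\alpha(s)>0$, $\zeta(r)>0$ on $[s,t]$, and $\int_s^t\alpha\zeta\,dr<1$ to get a strictly negative bound. The only cosmetic difference is that you spell out the standard midpoint argument converting strict concavity into uniqueness, while the paper simply asserts that strict concavity suffices.
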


\begin{proof}
Suppose that $\int_s^t\alpha(r)\zeta(r)dr<1$. It suffices to prove that $F^{s,t}(\cdot, x)$ defines a strictly concave functional on $\mathcal{D}[s,t].$
Let $u_0,u_1\in \mathcal{D}[s,t]$ with $u_0\neq u_1.$ This implies that
\begin{align}\label{eq17}
\|u_0-u_1\|^2&=\e\int_s^t|u_0(r)-u_1(r)|dr>0.
\end{align}
Note that $u_0$ and $u_1$ are uniformly bounded above by one. Define $u_\lambda=(1-\lambda) u_0+\lambda u_1$ for $\lambda\in[0,1].$ A direct computation using the dominated convergence theorem gives 
\begin{align*}
\partial_{\lambda\lambda}F^{s,t}(u_\lambda,x)&=
\e\left[\partial_{xx}\Phi\left(t,Z\right)\left(\int_s^t\alpha(r)\zeta(r)(u_1(r)-u_0(r))dr\right)^2\right]\\
&\qquad-\e \left[\int_s^t\alpha(r)\zeta(r)(u_1(r)-u_0(r))^2dr\right],
\end{align*}
where $Z=x+\int_s^t\alpha(r)\zeta(r)u_\lambda(r)dr+\int_s^t\zeta(r)^{1/2}dB(r).$
Note that from the Cauchy-Schwarz inequality,
\begin{align*}
\left(\int_s^t\alpha(r)\zeta(r)(u_1(r)-u_0(r))dr\right)^2&\leq \int_s^t\alpha(r)\zeta(r)dr\cdot\int_s^t\alpha(r)\zeta(r)(u_1(r)-u_0(r))^2dr.
\end{align*}
Using this and \eqref{prop2:eq3}, it follows that
\begin{align*}
&\partial_{\lambda\lambda}F^{s,t}(u_\lambda,x)\\
&\leq \e\left[\left(\int_s^t\alpha(r)\zeta(r)(u_1(r)-u_0(r))dr\right)^2-\int_s^t\alpha(r)\zeta(r)(u_1(r)-u_0(r))^2dr\right]\\
&\leq \e\left[\int_s^t\alpha(r)\zeta(r)dr \cdot \int_s^t\alpha(r)\zeta(r)(u_1(r)-u_0(r))^2dr-\int_s^t\alpha(r)\zeta(r)(u_1(r)-u_0(r))^2dr\right]\\
&=\left(\int_s^t\alpha(r)\zeta(r)dr-1\right)\e\left[\int_s^t\alpha(r)\zeta(r)(u_1(r)-u_0(r))^2dr\right].
\end{align*}
Note that since $\alpha$ is nondecreasing and $\alpha(s)>0$, we have $\alpha(r)>0$ for all $s\leq r\leq t.$ Also note that $\zeta(r)>0$ for all $0<r<t.$ Consequently, using \eqref{eq17} and $\int_s^t\alpha(r)\zeta(r)dr<1$, we conclude that $\partial_{\lambda\lambda}F^{s,t}(u_\lambda,x)<0$ and this gives the strict concavity of $F^{s,t}(\cdot,x).$ 
\end{proof}

\section{Strict convexity of the Parisi PDE solution}\label{sec3}

Suppose that $\mu_0,\mu_1\in\mathcal{M}$ and $x_0,x_1\in\mathbb{R}.$ For $\lambda\in[0,1],$ we set
\begin{align*}
\mu_\lambda&=(1-\lambda)\mu_0+\lambda\mu_1,\\
x_\lambda&=(1-\lambda)x_0+\lambda x_1.
\end{align*} 
Denote by $\alpha_0,\alpha_\lambda,\alpha_1$ the distribution functions and by $\Phi_0,\Phi_\lambda,\Phi_1$ the Parisi PDE solutions corresponding to $\mu_0,\mu_\lambda,\mu_1$, respectively. Let $\tau$ be the last time that $\alpha_0$ and $\alpha_1$ are different, that is,
\begin{align*}
\tau&=\min\left\{s\in[0,1]:\alpha_0(r)=\alpha_1(r),\,\,\forall r\in [s,1]\right\}.
\end{align*}
Note that since $\alpha_0(1)=\alpha_1(1)=1$ and $\alpha_0,\alpha_1$ are right continuous, $\tau$ is well-defined and that if $\mu_0\neq \mu_1$, then $\tau>0.$ The following general result immediately implies Theorem \ref{thm2} by letting $s=0$ and $x_0=x_1=h$ in $(ii)$ below.

\begin{theorem}\label{thm1} We have that
\begin{itemize}
\item[$(i)$] For any $\mu_0,\mu_1\in\mathcal{M},$ 
\begin{align}
\label{thm1:eq1}
\Phi_\lambda(s,x_\lambda)&\leq (1-\lambda)\Phi_0(s,x_0)+\lambda\Phi_1(s,x_1)
\end{align}
for all $\lambda,s\in[0,1]$ and $x_0,x_1\in\mathbb{R}.$

\item[$(ii)$] Suppose that $\mu_0,\mu_1$ are distinct. Then for any $0\leq s<\tau,$ the inequality \eqref{thm1:eq1} is strict for all $\lambda\in(0,1)$ and $x_0,x_1\in\mathbb{R}$.
\end{itemize}
\end{theorem}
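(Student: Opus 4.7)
The plan is to deduce both assertions from the variational representation of Theorem~\ref{thm0} by exploiting the convexity (respectively, strict convexity) of the terminal data of the Parisi PDE.

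For part $(i)$ I would invoke \eqref{MaxFormula} with $t=1$, where the terminal $\Phi_\mu(1,\cdot)=\log\cosh(\cdot)$ is convex and independent of $\mu$. For any $u\in\mathcal{D}[s,1]$, the argument of $\log\cosh$ inside $C^{s,1}(u,x_\lambda)$ decomposes as
\[
x_\lambda+\int_s^1\alpha_\lambda(r)\zeta(r)u(r)\,dr+\int_s^1\zeta(r)^{1/2}dB(r)=(1-\lambda)Y_0(u)+\lambda Y_1(u),
\]
where $Y_i(u)=x_i+\int_s^1\alpha_i\zeta u\,dr+\int_s^1\zeta^{1/2}dB$, while the quadratic cost $L^{s,1}(u)$ is linear in $\alpha_\lambda$. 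Convexity of $\log\cosh$ therefore yields $F_\lambda^{s,1}(u,x_\lambda)\le(1-\lambda)F_0^{s,1}(u,x_0)+\lambda F_1^{s,1}(u,x_1)\le(1-\lambda)\Phi_0(s,x_0)+\lambda\Phi_1(s,x_1)$, and taking the supremum over $u$ on the left gives \eqref{thm1:eq1}.

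For part $(ii)$ the crucial point is to run the same argument on $[s,\tau]$ rather than $[s,1]$. Since $\alpha_0=\alpha_1$ on $[\tau,1]$ and all three Parisi PDE solutions share the terminal $\log\cosh$, they coincide on $[\tau,1]$; in particular $\Psi:=\Phi_0(\tau,\cdot)=\Phi_\lambda(\tau,\cdot)=\Phi_1(\tau,\cdot)$ is a single function which, by \eqref{prop2:eq3}, is \emph{strictly} convex. Assume equality in \eqref{thm1:eq1} holds at some $s<\tau$ and let $u^{*}$ denote the maximizer for $\Phi_\lambda$ on $[s,\tau]$ furnished by \eqref{max}. The chain
\[
\Phi_\lambda(s,x_\lambda)=F_\lambda^{s,\tau}(u^{*},x_\lambda)\le(1-\lambda)F_0^{s,\tau}(u^{*},x_0)+\lambda F_1^{s,\tau}(u^{*},x_1)\le(1-\lambda)\Phi_0(s,x_0)+\lambda\Phi_1(s,x_1)
\]
must then be an equality throughout, and strict convexity of $\Psi$ together with $\lambda\in(0,1)$ forces $Y_0(u^{*})=Y_1(u^{*})$ almost surely, i.e.\
\[
\int_s^\tau(\alpha_1(r)-\alpha_0(r))\zeta(r)u^{*}(r)\,dr=x_0-x_1\quad\text{a.s.}
\]
To contradict this I would use that $u^{*}$ is a continuous martingale. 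Applying It\^{o}'s formula to $u^{*}(r)=\partial_x\Phi_\lambda(r,X_\lambda(r))$ and differentiating \eqref{eq:ParPDE} in $x$, the drift cancels and $du^{*}(r)=\partial_{xx}\Phi_\lambda(r,X_\lambda(r))\zeta(r)^{1/2}dB(r)$ with $u^{*}(s)=\partial_x\Phi_\lambda(s,x_\lambda)$ deterministic. Setting $g(r):=(\alpha_1-\alpha_0)(r)\zeta(r)$, a stochastic Fubini rewrites
\[
\int_s^\tau g(r)\bigl(u^{*}(r)-u^{*}(s)\bigr)dr=\int_s^\tau\left(\int_v^\tau g(r)\,dr\right)\partial_{xx}\Phi_\lambda(v,X_\lambda(v))\zeta(v)^{1/2}dB(v).
\]
Since the left-hand side is a.s.\ constant, It\^{o}'s isometry combined with $\partial_{xx}\Phi_\lambda\ge C/\cosh^{2}>0$ from \eqref{prop2:eq3} and $\zeta(v)>0$ on $(0,1]$ forces $\int_v^\tau g(r)\,dr=0$ for a.e.\ $v\in[s,\tau]$; differentiating in $v$ yields $\alpha_0=\alpha_1$ a.e.\ on $[s,\tau]$, and by right-continuity of the distribution functions $\alpha_0=\alpha_1$ on $[s,1]$, which contradicts the minimality in the definition of $\tau$.

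The main obstacle is this last step. The weak inequality of part $(i)$ follows mechanically from convexity of $\log\cosh$ and linearity of $\alpha_\lambda$ in $\lambda$, but upgrading to strict inequality demands converting an a.s.\ equality among Brownian-driven quantities into a deterministic statement about $\alpha_0-\alpha_1$. The martingale representation of $u^{*}$ produced by the Parisi PDE, together with the strict positivity of $\partial_{xx}\Phi_\lambda$, is precisely what turns the variance computation into the clean identity $\int_v^\tau g\,dr=0$ that contradicts the definition of $\tau$; for general (non-continuous) $\mu_\lambda$ an approximation by measures with continuous distribution functions, as in the proof of Theorem~\ref{thm0}, will be required to justify the It\^{o} manipulations.
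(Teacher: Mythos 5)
Your part $(i)$ is the same argument as the paper's. Part $(ii)$ is correct but follows a genuinely different route. The paper argues via \emph{uniqueness of the maximizer} (Proposition~\ref{prop1}): assuming equality in \eqref{thm1:eq1} at some $s$, the paper shows $u_\lambda^*$ is also optimal for $\Phi_0$ and $\Phi_1$, invokes Proposition~\ref{prop1} (which requires the technical condition $\int_s^t\alpha_0\zeta\,dr<1$, hence the careful choice of an intermediate time $\tau'$ and a two-stage argument over $[\tau',\tau)$ and then $[0,\tau')$) to conclude $u_0^*=u_\lambda^*$, and then deploys both SDE identities \eqref{eq10}--\eqref{eq11} of Lemma~\ref{lem} together with It\^{o} isometry to pin down $\alpha_0=\alpha_\lambda$ on $[s,t]$. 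You instead run the variational formula directly on $[s,\tau]$, observe that $\Phi_0,\Phi_\lambda,\Phi_1$ coincide on $[\tau,1]$ (since $\alpha_0=\alpha_\lambda=\alpha_1$ there), and exploit the \emph{strict convexity} of the common terminal datum $\Psi=\Phi_\lambda(\tau,\cdot)$ via \eqref{prop2:eq3} to upgrade equality in the Jensen step to $Y_0(u^*)=Y_1(u^*)$ a.s.; the martingale representation of $u^*$ (the first identity of Lemma~\ref{lem} only) plus stochastic Fubini and It\^{o} isometry then force $\int_v^\tau(\alpha_1-\alpha_0)\zeta\,dr\equiv 0$ and hence $\alpha_0=\alpha_1$ on $[s,\tau]$, contradicting the definition of $\tau$. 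Your route is arguably cleaner: it bypasses Proposition~\ref{prop1} and the $\tau'$-bootstrapping entirely, needs only one of the two SDEs in Lemma~\ref{lem}, and uses the pointwise strict convexity of the terminal value in the state variable $x$ rather than strict concavity of $F^{s,t}$ in the control $u$. The paper's route, on the other hand, produces Proposition~\ref{prop1} as a stand-alone ingredient of independent interest. As you already note, your It\^{o}/Fubini manipulations should be justified for general $\mu_\lambda$ by the same approximation-by-continuous-$\alpha$ that the paper uses to prove Lemma~\ref{lem}, so that step is fine to delegate.
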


%\begin{example} The theorem above also implies that for any convex function $\Psi$ and any positive constants $a, b$ the $L_m$ norm of $\Psi(a+bx)$ is log-convex in $m$. This is a stronger statement than the classical fact that the $L_m$ norm is log-convex in $1/m$. Further details in this direction can be found in \cite[Section 1]{P08}.
%
%\end{example}

As one shall see, while the statement \eqref{thm1:eq1} follows directly from our representation theorem, the proof for the strict inequality of \eqref{thm1:eq1} is more delicate and is based on subtle properties of the maximizers that we summarize as follows.

\begin{lemma}\label{lem}
Let $\mu\in\mathcal{M}$. Denote by $\alpha$ the distribution function and by $\Phi$ the Parisi PDE solution corresponding to $\mu$. Let $0\leq s\leq t\leq 1$ and $x\in\mathbb{R}$. Suppose that $(X(r))_{s\leq r\leq t}$ satisfies
\begin{align}
\begin{split}\label{eq0}
dX(r)&=\alpha(r)\zeta(r)\partial_x\Phi(r,X(r))dr+\zeta(r)^{1/2}dB(r),\\
X(s)&=x.
\end{split}
\end{align}
Then for any $s\leq a\leq b\leq t$, we have
\begin{align}\label{eq10}
\partial_x\Phi(b,X(b))-\partial_x\Phi(a,X(a))&=\int_a^b\zeta(r)^{1/2}\partial_{xx}\Phi(r,X(r))dB(r)
\end{align}
and
\begin{align}
\begin{split}\label{eq11}
&\partial_{xx}\Phi(b,X(b))-\partial_{xx}\Phi(a,X(a))\\
&=-\int_a^b\alpha(r)\zeta(r)(\partial_{xx}\Phi(r,X(r)))^2dr+\int_a^b\zeta(r)^{1/2}\partial_{x}^3\Phi(r,X(r))dB(r),
\end{split}
\end{align}
where the last It\^{o}'s integral is well-defined by \eqref{prop2:eq6}.
\end{lemma}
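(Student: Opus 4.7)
The plan is to apply It\^o's formula to the two processes $\partial_x\Phi(r,X(r))$ and $\partial_{xx}\Phi(r,X(r))$ and to use the Parisi PDE \eqref{eq:ParPDE} to collapse the resulting drift terms. I would first treat the case where $\alpha$ is continuous on $[0,1]$, so that by Proposition \ref{prop2}(iii) both $\partial_x\Phi$ and $\partial_{xx}\Phi$ lie in $\mathcal{C}^{1,2}$. Differentiating \eqref{eq:ParPDE} once and then twice in $x$ yields
\begin{align*}
\partial_s\partial_x\Phi &= -\tfrac{\zeta}{2}\bigl(\partial_x^3\Phi+2\alpha\,\partial_x\Phi\,\partial_{xx}\Phi\bigr),\\
\partial_s\partial_{xx}\Phi &= -\tfrac{\zeta}{2}\bigl(\partial_x^4\Phi+2\alpha(\partial_{xx}\Phi)^2+2\alpha\,\partial_x\Phi\,\partial_x^3\Phi\bigr),
\end{align*}
where the requisite derivatives exist and are continuous by Proposition \ref{prop2}(i). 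Substituting the first of these into It\^o's formula applied to $\partial_x\Phi(r,X(r))$ and using $dX=\alpha\zeta\,\partial_x\Phi\,dr+\zeta^{1/2}dB$, the entire $dr$-drift cancels identically, leaving only $\zeta^{1/2}\partial_{xx}\Phi\,dB$; integrating from $a$ to $b$ gives \eqref{eq10}. An analogous computation with $\partial_{xx}\Phi(r,X(r))$, using the second differentiated PDE, leaves a residual drift of $-\alpha\zeta(\partial_{xx}\Phi)^2$ together with stochastic part $\zeta^{1/2}\partial_x^3\Phi\,dB$, producing \eqref{eq11}.

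For a general $\mu\in\mathcal{M}$, I would pick a sequence $(\mu_n)$ weakly converging to $\mu$ whose distribution functions $\alpha_n$ are continuous on $[0,1]$, and denote by $(\Phi_n)$, $(X_n)$ the associated Parisi PDE solutions and SDE solutions. The Gronwall argument already carried out in the proof of Theorem \ref{thm0} shows $\sup_{r\in[s,t]}|X_n(r)-X(r)|\to 0$ almost surely, and Proposition \ref{prop2}(iv) gives $\partial_x^j\Phi_n\to\partial_x^j\Phi$ uniformly for $j=0,1,2$. Combined with the uniform bounds of Proposition \ref{prop2}(ii) and the a.e.\ convergence $\alpha_n\to\alpha$, these suffice, by It\^o's isometry and dominated convergence, to pass to the limit in every term of \eqref{eq10} and in the Lebesgue integral on the right-hand side of \eqref{eq11}.

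The main obstacle is passing to the limit in the stochastic integral $\int_a^b \zeta^{1/2}\partial_x^3\Phi_n(r,X_n(r))\,dB(r)$ appearing in \eqref{eq11}, since uniform convergence of $\partial_x^3\Phi_n$ is not immediately provided by Proposition \ref{prop2}(iv). I would argue indirectly: by \eqref{eq11} applied to $\mu_n$ and the convergences above, this stochastic integral is forced to converge in $L^2$ to $\partial_{xx}\Phi(b,X(b))-\partial_{xx}\Phi(a,X(a))+\int_a^b\alpha\zeta(\partial_{xx}\Phi(r,X(r)))^2\,dr$. To identify the limit with $\int_a^b\zeta^{1/2}\partial_x^3\Phi(r,X(r))\,dB(r)$, I would exploit the uniform bound $|\partial_x^3\Phi_n|\leq 4$ from \eqref{prop2:eq6} to extract a subsequence along which $\partial_x^3\Phi_n$ converges weakly in $L^2_{\mathrm{loc}}$; its weak limit must equal $\partial_x^3\Phi$ because the uniform convergence $\partial_{xx}\Phi_n\to\partial_{xx}\Phi$ determines $\partial_x^3\Phi$ in the distributional sense. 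Since the limit is the same along every subsequence, full convergence in $L^2$ follows, and It\^o's isometry then completes the identification.
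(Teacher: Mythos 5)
Your It\^o computation for the case of continuous $\alpha$ is exactly the paper's argument: differentiate the Parisi PDE once and twice in $x$, substitute into It\^o's formula for $\partial_x\Phi(r,X(r))$ and $\partial_{xx}\Phi(r,X(r))$, and check the drift terms cancel (leaving the residual $-\alpha\zeta(\partial_{xx}\Phi)^2$ in the second case). The differentiated PDEs you wrote are correct, the cancellations go through, and Proposition~\ref{prop2}(iii) is the right justification for applying It\^o. The paper then disposes of general $\mu$ in a single sentence (``by an approximation argument as we did in the proof of Theorem~\ref{thm0}''), and you were right to notice that for \eqref{eq11} this is not as innocuous as for \eqref{eq10}: the passage to the limit in $\int_a^b\zeta^{1/2}\partial_x^3\Phi_n(r,X_n(r))\,dB(r)$ is the one place where Proposition~\ref{prop2}(iv) does not directly supply the needed uniform convergence of $\partial_x^3\Phi_n$.

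However, your proposed repair does not close the gap. You argue that $\partial_x^3\Phi_n\rightharpoonup\partial_x^3\Phi$ weakly in $L^2_{\mathrm{loc}}([0,1]\times\mathbb{R})$ and assert that this, together with It\^o's isometry, identifies the $L^2(\Omega)$-limit of the stochastic integrals. The problem is that the integrand you actually need to control is $\partial_x^3\Phi_n(r,X_n(r))$, a composition along a random path, and weak $L^2_{\mathrm{loc}}$ convergence of the functions $\partial_x^3\Phi_n$ on the $(r,x)$-plane gives essentially no information about the convergence of such compositions (consider $f_n(x)=\cos(nx)$: $f_n\rightharpoonup 0$ weakly in $L^2_{\mathrm{loc}}$, but $f_n(\cdot)$ composed with a path need not converge at all). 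What your surrounding observations do establish is that $\zeta^{1/2}\partial_x^3\Phi_n(\cdot,X_n(\cdot))$ converges \emph{strongly} in $L^2(\Omega\times[a,b])$ to some process $G$ (since every other term in \eqref{eq11} for $\mu_n$ converges), but identifying $G$ with $\zeta^{1/2}\partial_x^3\Phi(\cdot,X(\cdot))$ requires some form of pointwise or locally uniform convergence of $\partial_x^3\Phi_n$ to $\partial_x^3\Phi$, not merely weak convergence. One standard way to get this is to show, in addition to the bound $|\partial_x^3\Phi_n|\le 4$, a uniform bound on $\partial_x^4\Phi_n$ (making $\partial_x^3\Phi_n$ equi-Lipschitz in $x$) and an analogous control in $r$, and then invoke Arzel\`a--Ascoli together with the known uniform convergence of $\partial_{xx}\Phi_n$ to identify the limit; this would deliver locally uniform convergence of $\partial_x^3\Phi_n$, after which bounded convergence finishes the It\^o isometry step. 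As written, the ``weak limit along every subsequence, hence full convergence'' step is not a valid inference for the quantity you actually need.
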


\begin{proof}
By an approximation argument as we did in the proof of Theorem \ref{thm0}, it suffices to assume that $\alpha$ is continuous on $[0,1].$ From \eqref{prop2:eq4}, this assumption ensures that $\Phi, \partial_x\Phi, \partial_{xx}\Phi~\in~\mathcal C^{1,2}$. Recall that $\Phi$ satisfies
\begin{align*}
\partial_s\Phi&=-\frac{\zeta}{2}(\partial_{xx}\Phi+\alpha(\partial_{x}\Phi)^2).
\end{align*}
A direct computation yields
\begin{align*}
\partial_{xs}\Phi&=-\frac{1}{2}\zeta(\partial_{x}^3\Phi)-\alpha\zeta (\partial_{xx}\Phi)(\partial_x\Phi)\\
\partial_{xxs}\Phi&=-\frac{1}{2}\zeta(\partial_{x}^4\Phi)-\alpha\zeta (\partial_{x}^3\Phi)(\partial_x\Phi)-\alpha\zeta(\partial_{xx}\Phi)^2.
\end{align*}
Now, using It\^{o}'s formula \cite[Theorem 3.6]{KS} and these two equations,
\begin{align*}
d(\partial_x\Phi)&=(\partial_{xs}\Phi) dr+(\partial_{xx}\Phi) dX+\frac{1}{2}\zeta(\partial_{x}^3\Phi) dr\\
&=\left(-\frac{1}{2}\zeta(\partial_{x}^3\Phi)-\alpha \zeta (\partial_{xx}\Phi)(\partial_x\Phi)\right)dr\\
&\,\,+(\partial_{xx}\Phi) \left(\alpha \zeta(\partial_x\Phi)dr+\zeta^{1/2}dB\right)+\frac{1}{2}\zeta(\partial_{x}^3\Phi )dr\\
&=\zeta^{1/2}(\partial_{xx}\Phi)dB
\end{align*}
and
\begin{align*}
d(\partial_{xx}\Phi)&=(\partial_{xxs}\Phi)dr+(\partial_{x}^3\Phi)dX+\frac{1}{2}\zeta(\partial_{x}^4\Phi)dr\\
&=\left(-\frac{1}{2}\zeta(\partial_{x}^4\Phi)-\alpha \zeta(\partial_{x}^3\Phi)(\partial_x\Phi)-\alpha \zeta(\partial_{xx}\Phi)^2\right)dr\\
&+(\partial_{x}^3\Phi) \left(\alpha\zeta(\partial_x\Phi)dr+\zeta^{1/2}dB\right)+\frac{1}{2}\zeta(\partial_{x}^4\Phi)dr\\
&=-\alpha\zeta(\partial_{xx}\Phi)^2dr+\zeta^{1/2}(\partial_{x}^3\Phi)dB.
\end{align*}
These two equations complete our proof of Lemma \ref{lem}.
\end{proof}

\begin{proof}[Proof of Theorem \ref{thm1}] Let $\mu_0,\mu_1\in\mathcal{M}$, $x_0,x_1\in\mathbb{R}$, $\lambda\in[0,1]$, $0\leq s\leq t\leq 1$ and $u\in\mathcal{D}[s,t].$ Recall Theorem \ref{thm0} and set $\mu_\lambda = (1-\lambda)\mu_0 + \lambda\mu_1$. For $\theta = 0, \lambda, 1$, denote by 
\begin{align*}
F_\theta^{s,t},C_\theta^{s,t},L_\theta^{s,t}, 
\end{align*}
the functionals defined in the variational formulas corresponding respectively to $\mu_\theta$:
\begin{align}
\begin{split}
\label{eq16}
\Phi_\theta(s,x_\theta)&=\max\{F_\theta^{s,t}(u,x_\theta)|\mathcal{D}[s,t]\}.
\end{split}
\end{align}

\smallskip

To show $(i)$, let $0\leq s\leq 1$ and take $t=1$. Suppose that $u\in \mathcal{D}[s,1].$ Note that
\begin{align}
\begin{split}\label{eq5}
L_\lambda^{s,1}(u)&=\int_s^1\alpha_\lambda(r)\zeta(r)u(r)^2dr=(1-\lambda) L_0^{s,1}(u)+\lambda L_1^{s,1}(u).
\end{split}
\end{align}
Write 
\begin{align}
\begin{split}
\label{eq19}
&x_\lambda+\int_s^1\alpha_\lambda(r)\zeta(r)u(r)dr+\int_s^1\zeta(r)^{1/2}dB(r)\\
&=(1-\lambda)\left(x_0+\int_s^1\alpha_0(r)\zeta(r)u(r)dr+\int_s^1\zeta(r)^{1/2}dB(r)\right)\\
&+\lambda\left(x_1+\int_s^1\alpha_1(r)\zeta(r)u(r)dr+\int_s^1\zeta(r)^{1/2}dB(r)\right).
\end{split}
\end{align}
Since $\Phi_0(1,x)=\Phi_\lambda(1,x)=\Phi_1(1,x)=\log\cosh x$ is a convex function, we obtain
\begin{align*}
%\begin{split}\label{eq4}
C_\lambda^{s,1}(u,x_\lambda)&=\Phi_\lambda\left(1,x_\lambda+\int_s^1\alpha_\lambda(r)\zeta(r)u(r)dr+\int_{s}^1\zeta(r)^{1/2}dB(r)\right)\\
&\leq(1-\lambda)\Phi_\lambda\left(1,x_0+\int_s^1\alpha_0(r)\zeta(r)u(r)dr+\int_{s}^1\zeta(r)^{1/2}dB(r)\right)\\ 
&+\lambda\Phi_\lambda\left(1,x_1+\int_s^1\alpha_1(r)\zeta(r)u(r)dr+\int_{s}^1\zeta(r)^{1/2}dB(r)\right)\\
&=(1-\lambda) C_0^{s,1}(u,x_0)+\lambda C_1^{s,1}(u,x_1).
%\end{split}
\end{align*}
Combining with \eqref{eq5} and taking expectation, one has 
\begin{align*}
F_\lambda^{s,1}(u,x_\lambda)&\leq (1-\lambda) F_0^{s,1}(u,x_0)+\lambda F_1^{s,1}(u,x_1).
\end{align*}
Since this is true for any $u\in \mathcal{D}[s,1],$ the representation formula \eqref{eq16} gives
$(i)$.

\smallskip

Next, we turn to the proof of $(ii).$ Suppose that $\alpha_0\neq \alpha_1$ and $\lambda\in(0,1)$. We will show that first, there exists some $\tau'\in(0,\tau)$ such that \eqref{thm1:eq1} is strict for all $s\in [\tau',\tau)$ and then, prove \eqref{thm1:eq1} is also strict for all $s\in [0,\tau').$ The way of finding such $\tau'$ can be argued as follows. Note that if $\int_{s}^\tau\alpha_0(r)dr=\int_{s}^\tau\alpha_1(r)dr=0$ for all $s\in[0,\tau),$ then $\alpha_0=\alpha_1=0$ on $[0,\tau)$ since $\alpha_0$ and $\alpha_1$ are nondecreasing. This implies that $\alpha_0=\alpha_1$ on $[0,1]$, which contradicts that $\alpha_0\neq \alpha_1.$ Therefore, there exists $\tau'< \tau$ such that at least one of  the integrals $\int_{\tau'}^{\tau} \alpha_0(r) dr, \int_{\tau'}^{\tau} \alpha_1(r) dr$ is nonzero. Without loss of generality this implies that $\alpha_0(\tau')>0$. Making $\tau'$ bigger if necessary we see that we can choose $0<\tau' < \tau$ such that the following statement holds:
\begin{align}
\label{proof:thm1:eq2}
\mbox{$\alpha_0(\tau')>0$ and $\int_{\tau'}^{\tau}\alpha_0(r) \zeta(r)dr<1$.}
\end{align}

Now we argue by contradiction. Suppose equality in \eqref{thm1:eq1} holds for some $s\in[\tau',\tau)$ and $x_0,x_1.$ Take $t=\tau$. Let $u_0^*,u_\lambda^*,u_1^*$ be the corresponding maximizers of \eqref{eq16} generated by \eqref{max}. As in \eqref{eq5},
\begin{align*}
L_\lambda^{s,t}(u_\lambda^*)&=(1-\lambda) L_0^{s,t}(u_\lambda^*)+\lambda L_1^{s,t}(u_\lambda^*).
\end{align*}
Also writing 
$$
x_\lambda+\int_s^t\alpha_\lambda(r)\zeta(r)u_\lambda^*(r)dr+\int_s^t\zeta(r)^{1/2}dB(r)
$$
in the same away as \eqref{eq19}, $(i)$ gives
\begin{align*}
C_\lambda^{s,t}(u_\lambda^*,x_\lambda)&\leq(1-\lambda)C_0^{s,t}(u_\lambda^*,x_0)+\lambda C_1^{s,t}(u_\lambda^*,x_1).
\end{align*}
They together imply that
\begin{align}
\begin{split}
\label{eq12}
F_\lambda^{s,t}(u_\lambda^*,x_\lambda)&\leq (1-\lambda) F_0^{s,t}(u_\lambda^*,x_0)+ \lambda F_1^{s,t}(u_\lambda^*,x_1).
\end{split}
\end{align}
Note that
\begin{align*}
F_0^{s,t}(u_\lambda^*,x_0)\leq \Phi_0(s,x_0),\,\,F_1^{s,t}(u_\lambda^*,x_1)\leq \Phi_1(s,x_1),\,\,
F_\lambda^{s,t}(u_\lambda^*,x_\lambda)=\Phi_\lambda(s,x_\lambda).
\end{align*}
Consequently, from \eqref{eq12} and the assumption
$$
\Phi_\lambda(s,x_\lambda)=(1-\lambda)\Phi_0(s,x_0)+\lambda\Phi_1(s,x_1),
$$
we obtain
\begin{align*}
F_0^{s,t}(u_\lambda^*,x_0)=\Phi_0(s,x_0),\\
F_1^{s,t}(u_\lambda^*,x_1)=\Phi_1(s,x_1).
\end{align*}
In other words, $u_\lambda^*$ realizes the maxima of the representations for $\Phi_0(s,x_0)$ and $\Phi_1(s,x_1)$. Now from \eqref{proof:thm1:eq2} and Proposition \ref{prop1}, we conclude uniqueness of the maximizer for $\Phi_0(s,x_0)$, that is, $u_0^*=u_\lambda^*$ with respect to the norm $\|\cdot\|.$ Since $u_0^*$ and $u_\lambda^*$ are continuous on $[s,t],$ we have
\begin{align}
\label{eq13}
\partial_x\Phi_0(r,X_0(r))=u_0^*(r)=u_\lambda^*(r)=\partial_x\Phi_\lambda(r,X_\lambda(r))
\end{align}
for all $s\leq r\leq t,$ where $(X_0(r))_{s\leq r\leq t}$ and $(X_\lambda(r))_{s\leq r\leq t}$ satisfy respectively,
\begin{align*}
dX_0(r)&=\alpha_0(r)\zeta(r)\partial_x\Phi_0(r,X_0(r))dr+\zeta(r)^{1/2}dB(r),\\
X_0(s)&=x_0,\\
dX_\lambda(r)&=\alpha_\lambda(r)\zeta(r)\partial_x\Phi_\lambda(r,X_\lambda(r))dr+\zeta(r)^{1/2}dB(r),\\
X_\lambda(s)&=x_\lambda.
\end{align*}
From \eqref{eq10} and \eqref{eq13},
\begin{align*}
\int_s^t\zeta(r)^{1/2}\partial_{xx}\Phi_0(r,X_0(r))dB(r)
&=\partial_x\Phi_0(t,X_0(t))-\partial_x\Phi_0(s,X_0(s))\\
&=\partial_x\Phi_\lambda(t,X_\lambda(t))-\partial_x\Phi_\lambda(s,X_\lambda(s))\\
&=\int_s^t\zeta(r)^{1/2}\partial_{xx}\Phi_\lambda(r,X_\lambda(r))dB(r).
\end{align*}
This gives by It\^{o}'s isometry,
\begin{align*}
&\int_s^t\zeta(r)\e\bigl(\partial_{xx}\Phi_0(r,X_0(r))-\partial_{xx}\Phi_\lambda(r,X_\lambda(r))\bigr)^2dr\\
&=\e\left(\int_s^t\zeta(r)^{1/2}\bigl(\partial_{xx}\Phi_0(r,X_0(r))-\partial_{xx}\Phi_\lambda(r,X_\lambda(r))\bigr)dB(r)\right)^2=0
\end{align*}
and therefore, by the continuity of $\partial_{xx}\Phi_0(\cdot,X_0(\cdot))$ and $\partial_{xx}\Phi_\lambda(\cdot,X_\lambda(\cdot))$ on $[s,t],$ we obtain
\begin{align}\label{eq14}
\partial_{xx}\Phi_0(r,X_0(r))=\partial_{xx}\Phi_\lambda(r,X_\lambda(r)),\,\,\forall r\in[s,t].
\end{align}
Next we use \eqref{eq11} and \eqref{eq14} to get that for all $a,b$ satisfying $s\leq a\leq b\leq t,$
\begin{align*}
&-\int_a^b\alpha_0(r)\zeta(r)\e(\partial_{xx}\Phi_0(r,X_0(r)))^2dr\\
&=\e\left(-\int_a^b\alpha_0(r)\zeta(r)(\partial_{xx}\Phi_0(r,X_0(r)))^2dr+\int_a^b\zeta(r)^{1/2}\partial_{x}^3\Phi_0(r,X_0(r))dB(r)\right)\\
&=\e\bigl(\partial_{xx}\Phi_0(b,X_0(b))-\partial_{xx}\Phi_0(a,X_0(a))\bigr)\\
&=\e\bigl(\partial_{xx}\Phi_\lambda(b,X_\lambda(b))-\partial_{xx}\Phi_\lambda(a,X_\lambda(a))\bigr)\\
&=\e\left(-\int_a^b\alpha_\lambda(r)\zeta(r)(\partial_{xx}\Phi_\lambda(r,X_\lambda(r)))^2dr+\int_a^b\zeta(r)^{1/2}\partial_{x}^3\Phi_\lambda(r,X_\lambda(r))dB(r)\right)\\
&=-\int_a^b\alpha_\lambda(r)\zeta(r)\e(\partial_{xx}\Phi_\lambda(r,X_\lambda(r)))^2dr.
\end{align*}
Note that by \eqref{prop2:eq3}, $\partial_{xx}\Phi_0(r,X_0(r))$ and $\partial_{xx}\Phi_\lambda(r,X_\lambda(r))$ are positive continuous functions on $[s,t]$ and that $\zeta>0$ on $[s,t].$ If $\alpha_0(a)\neq \alpha_1(a)$ for some $a\in [s,t),$ then from the right-continuity of $\alpha_0$ and $\alpha_1$ combining with \eqref{eq14}, we get a contradiction since
$$
\int_a^{b}\alpha_0(r)\zeta(r)\e(\partial_{xx}\Phi_0(r,X_0(r)))^2dr\neq \int_a^b\alpha_\lambda(r)\zeta(r)\e(\partial_{xx}\Phi_\lambda(r,X_\lambda(r)))^2dr
$$ 
for any $b$ sufficiently close to $a.$ Therefore, we reach $\alpha_0=\alpha_1$ on $[s,t]$, which contradicts the definition of $\tau.$ So the inequality \eqref{thm1:eq1} must be strict for all $s\in [\tau',\tau)$ and $x_0,x_1$. This finishes our first part of the argument.
\smallskip

In the second part, we will prove that \eqref{thm1:eq1} is also strict for all $s\in [0,\tau')$ and $x_0,x_1.$ Let $s\in [0,\tau')$ and $x_0,x_1\in\mathbb{R}.$ Take $t=\tau'.$ Recall the representation formula for $\Phi_\lambda(s,u_\lambda^*)$ from \eqref{eq16}.
We observe that from the first part of our proof for $(ii)$, for any $y_0,y_1\in\mathbb{R}$ and $y_\lambda=(1-\lambda)y_0+\lambda y_1,$
\begin{align}\label{eq15}
\Phi_\lambda(t,y_\lambda)<(1-\lambda) \Phi_0(t,y_0)+\lambda\Phi_1(t,y_1).
\end{align}
Therefore, we get the following strict inequality,
\begin{align*}
C_\lambda^{s,t}(u_\lambda^*,x_\lambda)&
%=\Phi_\lambda\left(s,x_\lambda+\int_{t}^s\alpha_\lambda(r)u_\lambda^*(r)dr+B(t)-B(s)\right)\\
%&<\lambda\Phi_0\left(s,x_0+\int_{t}^s\alpha_0(r)u_\lambda^*(r)dr+B(t)-B(s)\right)\\
%&+(1-\lambda)\Phi_1\left(s,x_1+\int_{t}^s\alpha_1(r)u_\lambda^*(r)dr+B(t)-B(s)\right)\\
<(1-\lambda)C_0^{s,t}(u_\lambda^*,x_0)+\lambda C_1^{s,t}(u_\lambda^*,x_1)
\end{align*}
and using
$$
L_\lambda^{s,t}(u_\lambda^*)=(1-\lambda) L_0^{s,t}(u_\lambda^*)+\lambda L_1^{s,t}(u_\lambda^*)
$$
gives
\begin{align*}
F_\lambda^{s,t}(u_\lambda^*,x_\lambda)<(1-\lambda)F_0^{s,t}(u_\lambda^*,x_0)+\lambda F_1^{s,t}(u_\lambda^*,x_1).
\end{align*}
Thus, since $u_\lambda^*$ is the maximizer for $\Phi_\lambda(s,x_\lambda),$
\begin{align*}
\Phi_\lambda(s,x_\lambda)&=F_\lambda^{s,t}(u_\lambda^*,x_\lambda)\\
&<(1-\lambda)F_0^{s,t}(u_\lambda^*,x_0)+\lambda F_1^{s,t}(u_\lambda^*,x_1)\\
&\leq (1-\lambda) \Phi_0(s,x_0)+\lambda\Phi_1(s,x_1).
\end{align*}
This completes our proof.\end{proof}

\noindent
%{\bf Acknowledgement.} The second author thanks Dmitry Panchenko for introducing the problem of the strictly convexity of the Parisi functional to him. We are indebted to Louis-Pierre Arguin for fruitful discussions during the early stage of the project and to CIRM at Luminy for the hospitality during the workshop on disordered systems in 2013. Special thanks are due to Gregory Lawler for explaining the representation formula in Bou\'{e}-Dupuis \cite{BD98} to us.
% % % % % % % % % % % % % % % % % % % % % % % % % % % % % % % % % % % % % % % % % % % % % % % % % % % % % % % % % % % % % % % % % % % % % % % % % % % % %
% % % % % % % % % % % % % % % % % % % % % % % % % % % % % % % % % % % % % % % % % % % % % % % % % % % % % % % % % % % % % % % % % % % % % % % % % % % % %
% % % % % % % % % % % % % % % % % % % % % % % % % % % % % % % % % % % % % % % % % % % % % % % % % % % % % % % % % % % % % % % % % % % % % % % % % % % % %

\end{document}